\pgfplotsset{compat=1.9}
\definecolor{javared}{rgb}{0.6,0,0} 
\definecolor{javagreen}{rgb}{0.25,0.5,0.35} 
\definecolor{javapurple}{rgb}{0.5,0,0.35} 
\definecolor{javadocblue}{rgb}{0.25,0.35,0.75} 
\definecolor{gray}{rgb}{0.4,0.4,0.4}
\definecolor{darkblue}{rgb}{0.0,0.0,0.6}
\definecolor{cyan}{rgb}{0.0,0.6,0.6}
\tiny\color{black},
\lstdefinelanguage{XML}
{
  morestring=[b]",
  morestring=[s]{>}{<},
  morecomment=[s]{<?}{?>},
  stringstyle=\color{black},
  identifierstyle=\color{darkblue},
  keywordstyle=\color{cyan},
  morekeywords={xmlns,version,type}
}
\newtheorem{lemma}{Lemma}
\title{The Dynamic Bowser Routing Problem}
\author[1]{Roberto Rossi\thanks{Corresponding author. Address: 29 Buccleuch place, EH89JS, Edinburgh, UK. Email: roberto.rossi@ed.ac.uk}}
\author[1]{Maurizio Tomasella}
\author[1]{Belen Martin-Barragan}
\author[2]{Tim Embley}
\author[3]{Christopher Walsh}
\author[4]{Matthew Langston}
\affil[1]{Business School, University of Edinburgh, Edinburgh, UK}
\affil[2]{Costain Group Plc., Maidenhead, UK}
\affil[3]{Cenex Advanced Technology Innovation Centre, Loughborough, UK}
\affil[4]{J C Bamford (JCB) Excavators Ltd., Rocester,  UK}
\date{}                                           
\begin{document}
\maketitle


\begin{abstract}
We investigate opportunities offered by telematics and analytics to enable better informed, and more integrated, collaborative management decisions on construction sites. We focus on efficient refuelling of assets across construction sites. More specifically, we develop decision support models that, by leveraging data supplied by different assets, schedule refuelling operations by minimising the distance travelled by the bowser truck as well as fuel shortages. Motivated by a practical case study elicited in the context of a project we recently conducted at Crossrail, we introduce the Dynamic Bowser Routing Problem. In this problem the decision maker aims to dynamically refuel, by dispatching a bowser truck, a set of assets which consume fuel and whose location changes over time; the goal is to ensure that assets do not run out of fuel and that the bowser covers the minimum possible distance. We investigate deterministic and stochastic variants of this problem and introduce effective and scalable mathematical programming models to tackle these cases. We demonstrate the effectiveness of our approaches in the context of an extensive computational study designed around data collected on site as well as supplied by our project partners.\\

{\bf Keywords: }Routing; Dynamic Bowser Routing Problem; Stochastic Bowser Routing Problem; Mixed-Integer Linear Programming; Construction. 
\end{abstract}

\section{Introduction}

The UK National Infrastructure Plan comprises a pipeline of public investment in infrastructure worth over \pounds100 billion between 2016 and 2020 \citep{HMTreasury_a} and has clear aspirations for low-carbon solutions \citep{HMGovernment,HMTreasury_b}. Unfortunately, the fragmented nature of construction logistics represents a challenge to these aspirations. Large firms, which are able to secure 80\% of the contracts in the sector, represent only 5\% of the UK construction market players; the remaining 95\% of the industry is composed by SMEs that act as subcontractors for the aforementioned large players. This fragmentation hinders communication among partners. To address this issue, data sharing practices should be fostered within the construction supply chain with the aim of ensuring better use of existing technology --- e.g. real time locating systems embedded in heavy good vehicles --- and of improving operational efficiency. 

In the context of a public-private partnership funded by Innovate UK, our team investigated opportunities offered by telematics and analytics to enable better informed, and more integrated, collaborative management decisions in construction sites. 
\begin{figure}[h!]
\centering
\includegraphics[width=1\columnwidth]{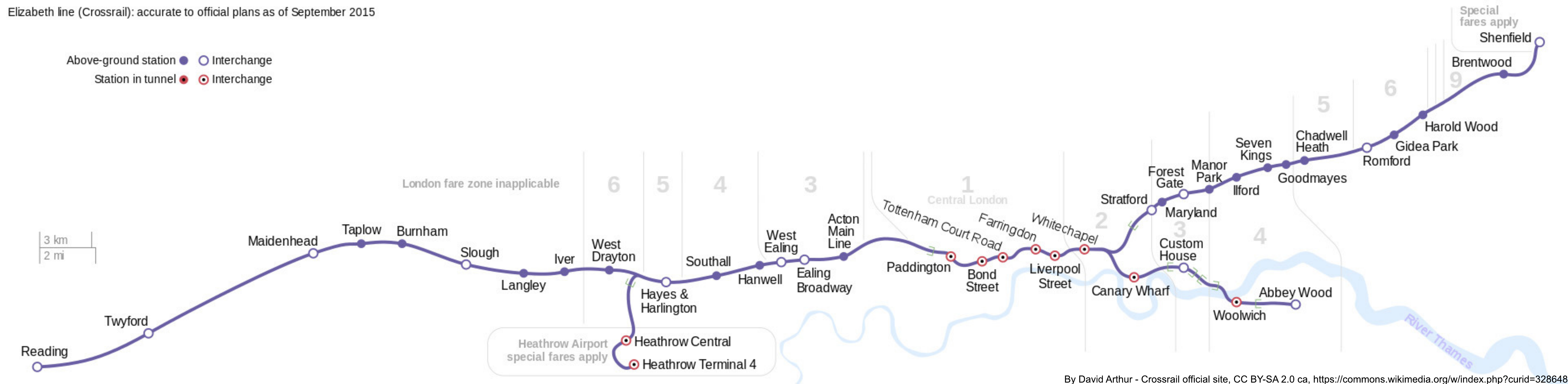}
\caption{The Crossrail project}
\label{fig:crossrail}
\end{figure}
We considered a selection of Crossrail costruction sites, a strategic UK infrastructure project for which Costain is one of the main contractors. Crossrail will deliver a new 118km high frequency, high capacity railway for London and the South East (Fig. \ref{fig:crossrail}). Our team mapped current construction processes and elicited barriers to the fully integrated, low-carbon construction supply chain. Building upon this analysis, we developed a set of solutions and related enabling business models. In this work we describe one of these solutions. 

We focus on efficient refuelling of assets across construction sites. More specifically, we develop decision support models that, by leveraging data supplied by different assets, schedule asset refuelling operations by minimising the distance travelled by the bowser truck as well as fuel shortages. Surprisingly, as we will illustrate in Section \ref{sec:related_works}, there are only few comparable studies in the literature, none of which fully addresses this challenge. Our contributions are the following:
\begin{itemize}
\item motivated by a practical case study elicited in the context of our project, we introduce the Dynamic Bowser Routing Problem (DBRP);
\item we develop mathematical programming models to tackle deterministic as well as stochastic variants of the problem and we augment these models with valid inequalities that enhance computational performances;
\item we introduce a comprehensive test bed built upon real world scenarios and data observed in the context of our experience at Crossrail sites and we carry out a thorough computational study based on it; our mathematical programming models scale well and can tackle instances of realistic size in reasonable time;
\item for the stochastic variant of the problem, we contrast results obtained via our mathematical programming heuristic against the optimal policy obtained via stochastic dynamic programming; our analysis shows that our approximation is effective.
\end{itemize}

The rest of this work is organised as follows. 
In Section \ref{sec:telemetry} we survey telemetry systems and related datasets that have been used in our study. 
In Section \ref{sec:problem_det} we introduce the DBRP and an associated mathematical programming model. 
In Section \ref{sec:problem_sto_sdp} we discuss the Stochastic Bowser Routing Problem (SBRP), which generalises the DBRP to deal with stochastic factors, such as asset fuel consumption and location on site; we discuss a mathematical programming heuristic for the case in which asset fuel consumption is stochastic.
In Section \ref{sec:computational} we present our computational study investigating effectiveness and scalability of our models.
In Section \ref{sec:related_works} we survey related works in the literature. 
Finally, in Section \ref{sec:con} we draw conclusions.

\section{Telemetry at Crossrail construction sites}\label{sec:telemetry}

Modern construction machines, such as excavators, bowser trucks, but also power generators and pumps, feature a plethora of sensors including but not limited to GPS location, fuel level, and engine status. The word {\em telemetry} refers to the transmission of measurements collected by these sensors from the equipment to the point of storage/consumption of the data \citep{citeulike:14100831,1987STIN...8913455.}. By building upon telemetry, {\em telematics}  brings together sensor technologies, telecommunication, and computer science to monitor and control remote objects \citep{nora1978informatisation}. There is a close connection between the fields of telematics and analytics \citep{citeulike:14100832}. Telematics systems heavily rely on visualisation technologies (descriptive analytics), on predictive algorithms (predictive analytics), and on optimisation models (prescriptive analytics) to support and automate decision making.

The introduction of sector-wide standards such as the Association of Equipment Manufacturers Professionals (AEMP) Telematics standard \citep{AEMP2010} has made it possible to collect and integrate data from a wide range of sources located across a single or multiple construction sites. By relying on this standard, live as well as historical data for each piece of equipment can be obtained from Application Programming Interfaces (APIs) that rely on standard HTTP protocols. This opens up a wide range of opportunities to deploy analytics across the construction site. A sample of telemetry data in XML format that can be obtained from assets by leveraging the AEMP standard v1.2 is shown in Appendix I. Sampling rate varies from one manufacturer to another, for instance the JCB LiveLink\textsuperscript{TM} system remotely samples data from assets every five minutes, while Komatsu Komtrax\textsuperscript{TM} features a coarser sampling rate of thirty minutes. 

In this work we exploit two indicators that can be currently monitored via existing telemetry systems: {\em asset (GPS) location} and {\em fuel consumption}: Fig. \ref{fig:odometer} illustrates location of a JCB 540-170 telehandler at the Connaught bridge Crossrail site in London between the 15th and the 19th of March 2016; Fig. \ref{fig:fuel} illustrates cumulative fuel consumption of three JCB 540-170 telehandlers deployed on various Crossrail sites between February and March 2016. 

\begin{figure}
\begin{minipage}{0.45\columnwidth}
\centering
\includegraphics[width=\columnwidth]{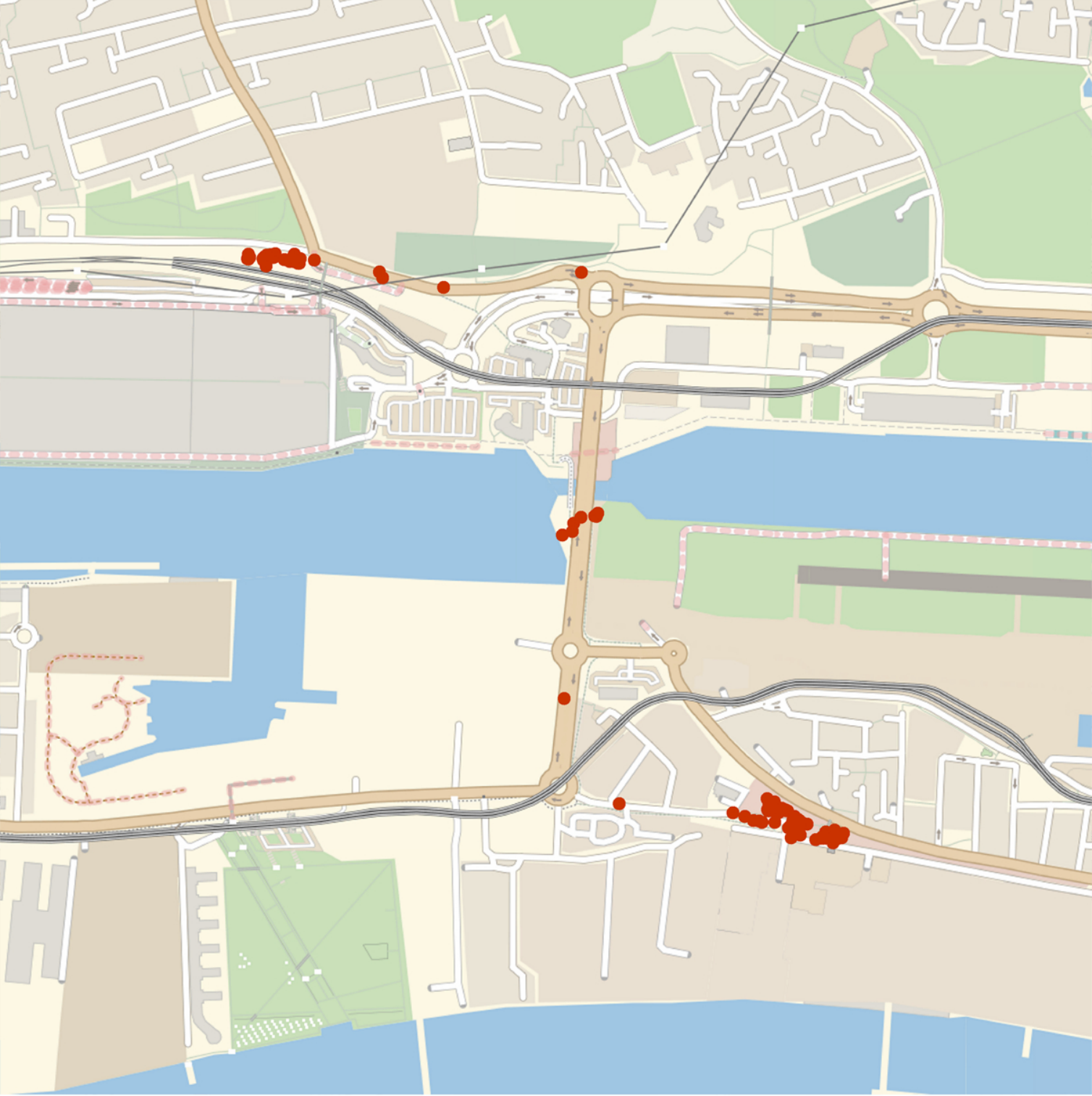}
\caption{GPS location of a JCB 540-170 telehandler at the Connaught bridge Crossrail site in London between the 15th and the 19th of March 2016}
\label{fig:odometer}
\end{minipage}
\hspace{1em}
\begin{minipage}{0.45\columnwidth}
\centering
\begin{tikzpicture}[scale=0.95]
  \begin{axis}[
  legend style={at={(0.03,0.8)},anchor=west},
  date coordinates in=x,
  date ZERO=2016-02-22,
  xticklabel=\month/\day,
  xticklabel style={rotate=45},
  ymin=0,
  ymax=1000]
   \addplot coordinates {
(2016-02-22,0)
(2016-02-23,0)
(2016-02-24,8)
(2016-02-25,23)
(2016-02-26,37)
(2016-02-27,42)
(2016-02-28,49)
(2016-02-29,49)
(2016-03-01,62)
(2016-03-02,74)
(2016-03-03,89)
(2016-03-04,97)
(2016-03-05,97)
(2016-03-06,97)
(2016-03-07,97)
(2016-03-08,97)
(2016-03-09,107)
(2016-03-10,138)
(2016-03-11,171)
(2016-03-12,182)
(2016-03-13,182)
(2016-03-14,182)
(2016-03-15,189)
(2016-03-16,219)
(2016-03-17,254)
(2016-03-18,281)
(2016-03-19,294)
(2016-03-20,294)
   };
\addplot coordinates {
(2016-02-22,0.00)
(2016-02-23,1.00)
(2016-02-24,45.00)
(2016-02-25,84.00)
(2016-02-26,131.00)
(2016-02-27,168.00)
(2016-02-28,178.00)
(2016-02-29,181.00)
(2016-03-01,214.00)
(2016-03-02,256.00)
(2016-03-03,307.00)
(2016-03-04,355.00)
(2016-03-05,385.00)
(2016-03-06,397.00)
(2016-03-07,399.00)
(2016-03-08,440.00)
(2016-03-09,487.00)
(2016-03-10,545.00)
(2016-03-11,608.00)
(2016-03-12,662.00)
(2016-03-13,674.00)
(2016-03-14,674.00)
(2016-03-15,720.00)
(2016-03-16,766.00)
(2016-03-17,816.00)
(2016-03-18,873.00)
(2016-03-19,929.00)
(2016-03-20,942.00)
      };
\addplot coordinates {    
(2016-02-22,0.00)
(2016-02-23,0.00)
(2016-02-24,19.00)
(2016-02-25,39.00)
(2016-02-26,65.00)
(2016-02-27,88.00)
(2016-02-28,88.00)
(2016-02-29,88.00)
(2016-03-01,94.00)
(2016-03-02,128.00)
(2016-03-03,155.00)
(2016-03-04,167.00)
(2016-03-05,190.00)
(2016-03-06,190.00)
(2016-03-07,190.00)
(2016-03-08,224.00)
(2016-03-09,246.00)
(2016-03-10,266.00)
(2016-03-11,292.00)
(2016-03-12,317.00)
(2016-03-13,322.00)
(2016-03-14,322.00)
(2016-03-15,347.00)
(2016-03-16,375.00)
(2016-03-17,390.00)
(2016-03-18,390.00)
(2016-03-19,392.00)
(2016-03-20,392.00)      
      };
\legend{Asset 1, Asset 2, Asset 3}   
  \end{axis}
 \end{tikzpicture}
\caption{Fuel consumption (in liters) of three JCB 540-170 telehandlers deployed on Crossrail sites between February and March 2016}
\label{fig:fuel}  
\end{minipage}
\end{figure}

Telemetry data can be used to track past fuel consumption and predict future consumption via predictive analytics techniques. The analysis of past performance generally proceeds from the visualisation of relevant indicators, for instance in the form of heatmaps (Fig. \ref{fig:heatmap_JCB540-170}).
Standard methods such as linear and non-linear regression, possibly taking into account asset location or other relevant predictor variables, can be employed to predict future fuel consumption. The thorough investigation of suitable predictive analytics strategies to determine future asset fuel consumption or location is beyond the scope of this work. However, in the economy of our investigation it is relevant to determine prototype consumption profiles for a range of assets deployed on construction sites, which we can then use in our computational study. 

\begin{figure}
\centering
\includegraphics[width=1\columnwidth]{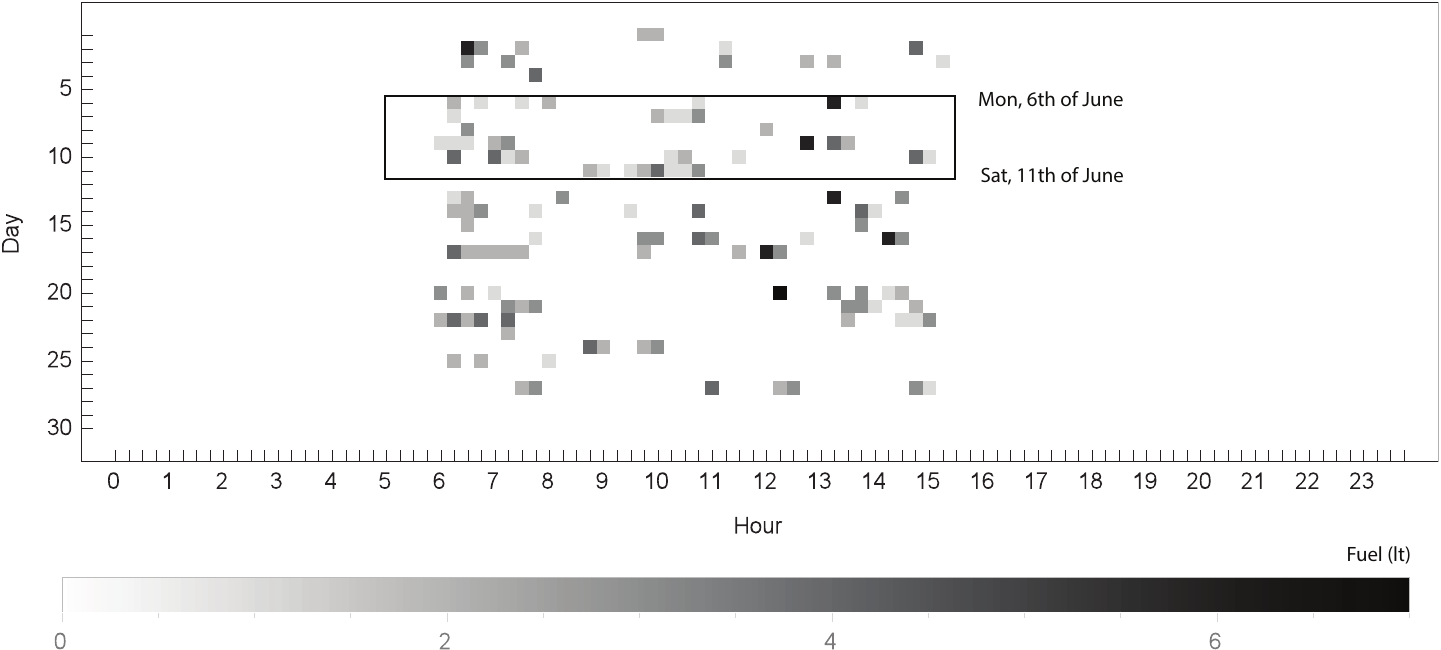}
\caption{Heatmap representing fuel consumption (in litres) for a Crossrail JCB 540-170 telehandler between the 1st of June and the 30th of June 2016; note that time is divided into discrete time periods that last 15 minutes each.}
\label{fig:heatmap_JCB540-170}
\end{figure}

\begin{table}[b]
\centering
\begin{tabular}{l|ll|l}
			&\multicolumn{2}{c|}{Compound Poisson}\\
Asset model	&$\lambda$&jump size distribution&p-value\\
\hline
JCB 540-170	&0.502645&Poisson(0.602257)&0.91558\\
JCB 540-170	&0.774271&Poisson(0.684164)&0.449291\\
JCB 540-170	&0.3731890&Poisson(1.004940)&0.933036\\	
JCB JS130	&1.03892&Poisson(1.01056)&0.460517\\	
JCB JS130	&0.926141&Poisson(0.393873)&0.116692\\	
JCB 86C-1	&0.476964&Poisson(0.960902)&0.778792\\
JCB 531-70	&0.283428&Poisson(0.0516331)&0.516138
\end{tabular}
\caption{Fitted distribution for a selection of JCB assets deployed on Crossrail sites in June 2016; the distribution represent the fuel consumption over a 15 minutes time bucket.}
\label{tab:fuel_consumption}  
\end{table}

By looking at the heatmap in Fig. \ref{fig:heatmap_JCB540-170} it is clear that asset utilisation follows specific patterns; for instance, this asset operates from early morning to mid afternoon and does not generally operate over weekends --- note that different assets feature different profiles, for instance some assets operate 24/7. In this specific instance, if we consider a time window during which the asset is in use --- e.g. the time window from Mon, 6th of June to Sat, 11th of June highlighted in Fig. \ref{fig:heatmap_JCB540-170} --- a compound Poisson distribution with parameter $\lambda=0.502645$ and jump size distribution Poisson($0.602257$) appears to provide a good fit for the consumption pattern observed during independent time buckets of 15 minutes. Our empirical study revealed that a compound Poisson distribution with Poisson jump size distribution generally provides a good fit for asset consumption data we collected in June 2016; by exploiting a month worth of fuel consumption data collected from different types of assets, including a variety of telehandlers and excavators, we excluded periods of inactivity (i.e. nights and weekend) and we fitted distribution parameters using a maximum likelihood approach; the resulting distributions are shown in Table \ref{tab:fuel_consumption}, the respective p-values are also reported in the table. Further details on the analysis carried out, including heatmaps for assets analysed, are provided in Appendix II.


In addition to analyzing asset fuel consumption distributions, we also analysed the behavior of a 7.5 tonne bowser truck fitting a 950 litres fuel tank that refuels assets across a number of Crossrail construction sites. We installed a logger on the truck and tracked its movements over a  working week from Mon, 20th of June to Fri, 24 of June. The daily distance covered by the bowser truck and the number of journeys are reported in Fig. \ref{fig:bowser_c610_journeys}; Fig. \ref{fig:bowser_journeys_percentage} shows distance covered at different hours.

\begin{figure}
\begin{minipage}[!t]{0.45\columnwidth}
\centering
\begin{tikzpicture}[scale=0.9]
  \begin{axis}[
  legend style={at={(0.03,0.8)},anchor=west},
  date coordinates in=x,
  date ZERO=2016-02-22,
  xticklabel=\month/\day,
  xticklabel style={rotate=45},
  ymin=0,
  ymax=170,
  ybar]
   \addplot coordinates {
(2016-06-20,1.37697)
(2016-06-21,99.8742784)
(2016-06-22,98.67171)
(2016-06-23,33.616935)
(2016-06-24,119.6985102)
   };
\addplot coordinates {
(2016-06-20,4)
(2016-06-21,20)
(2016-06-22,12)
(2016-06-23,18)
(2016-06-24,17)
      };   
\legend{Daily distance (km), Number of journeys}   
  \end{axis}
 \end{tikzpicture}
\caption{Daily distance covered by the bowser truck and number of journeys completed on a specific date}
\label{fig:bowser_c610_journeys}  
\end{minipage}
\hspace{2em}
\begin{minipage}[!t]{0.45\columnwidth}
\centering
\begin{tikzpicture}[scale=0.95]
  \begin{axis}[
  yticklabel=\pgfmathprintnumber{\tick}\,\%,
  ymin=0,
  ymax=30,
  ybar]
   \addplot coordinates {
(7,1.6)
(8,1.48)
(9,4.48)
(10,24.85)
(11,3.37)
(12,25.18)
(13,17.36)
(14,11.39)
(15,4.32)
(16,2.34)
(17,3.63)
   };
  \end{axis}
   \end{tikzpicture}
  \caption{Percentage distance covered by the bowser at a given hour of the day}
\label{fig:bowser_journeys_percentage}  
\end{minipage}
\end{figure}

\begin{figure}
\centering
\includegraphics[width=1\columnwidth]{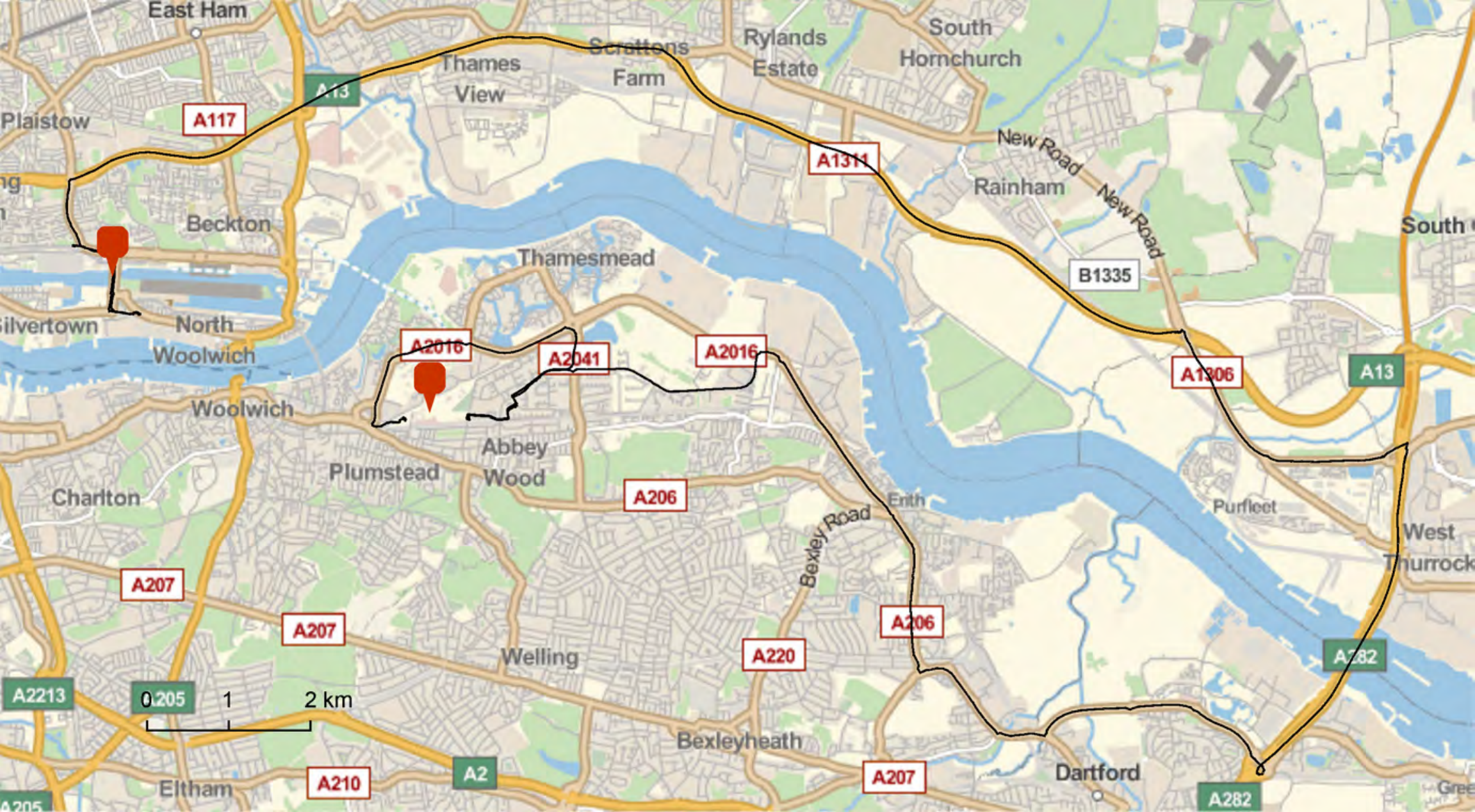}
\caption{A bowser journey from the Connaught bridge site in North Woolwich to the Plumstead site of Crossrail on Tue, 21th of June; the bowser covered 47.6km in 1h and 31 minutes.}
\label{fig:bowser_long_journey}
\end{figure}

\begin{figure}
\centering
\includegraphics[width=1\columnwidth]{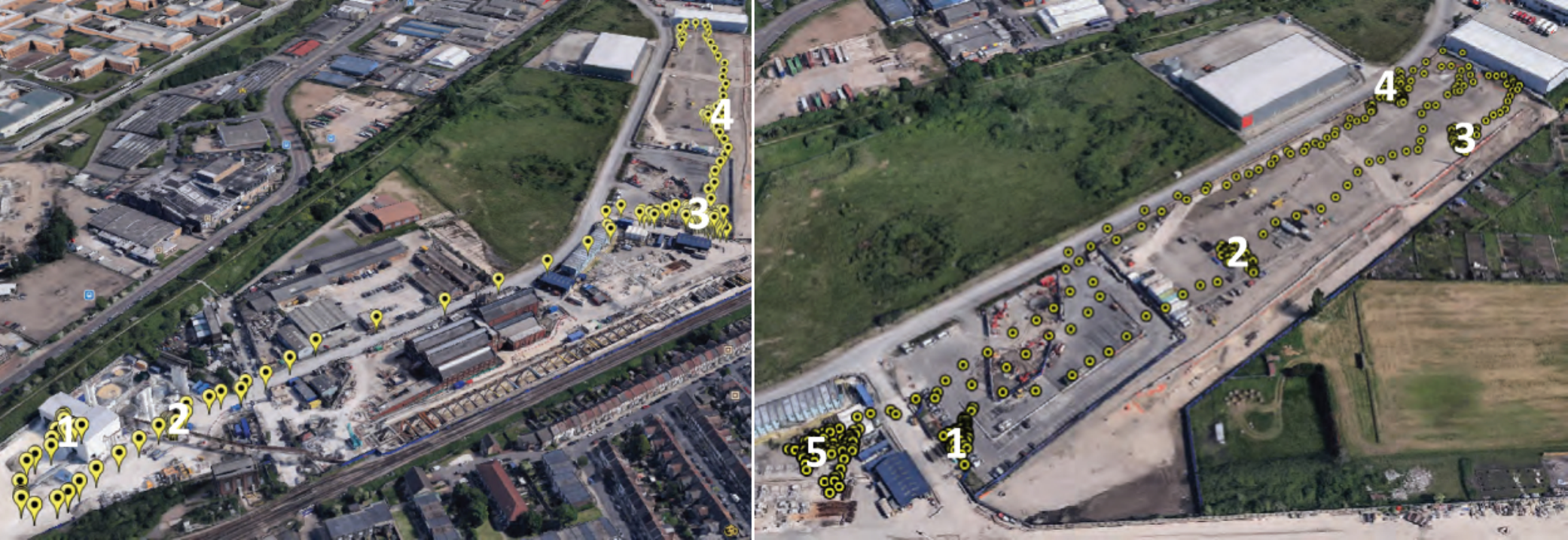}
\caption{Two short on-site journeys at Plumstead site; the first journey (on the left hand side) covered 1.3km in 44 minutes and comprises four stops of less than five minutes each; the second journey (on the right hand side) covered 1.7km in 37 minutes and comprises five stops of less than five minutes each.}
\label{fig:bowser_short_journeys}
\end{figure}

A typical day for the bowser truck comprises two long (approx 40km) journeys across construction sites, and about 20 short journeys carried out within specific construction sites. An example of a long journey between the Connaught bridge site and the Plumstead site site of Crossrail is shown in Fig. \ref{fig:bowser_long_journey}; examples of short on-site journeys at Plumstead site are given in Fig. \ref{fig:bowser_short_journeys}. 

In the rest of this work we introduce optimisation models that rely upon telemetry data to schedule asset refuelling operations across construction sites.

\section{The Dynamic Bowser Routing Problem}\label{sec:problem_det}

Motivated by the discussion in the previous section, we introduce the Dynamic Bowser Routing Problem (DBRP). A comprehensive table of symbols used in the rest of this work is introduced in Appendix III. We consider a construction site with $A$ assets (e.g. generators, telehandlers, excavators, etc) all powered by a single type of fuel (e.g. diesel). The construction site map is given. This map takes the form of a directed graph $\langle V, E \rangle$, where $V$ is the set of nodes and $E$ is the set of arcs. We assume this graph to be connected, but not necessarily fully connected. Nodes in $V$ represents relevant locations across the construction site, while arcs in $E$ represent travel distances between locations. Essentially, this graph can be seen as a network that represents adjacent accessible locations (i.e. nodes), and their respective direct travel distances (i.e. arcs), across the construction site. This form of map representation is common in GPS navigation systems and can be easily obtained via GPS traces.\footnote{see e.g. \url{https://www.openstreetmap.org/}}
\begin{figure}
\centering
\includegraphics[width=0.5\columnwidth]{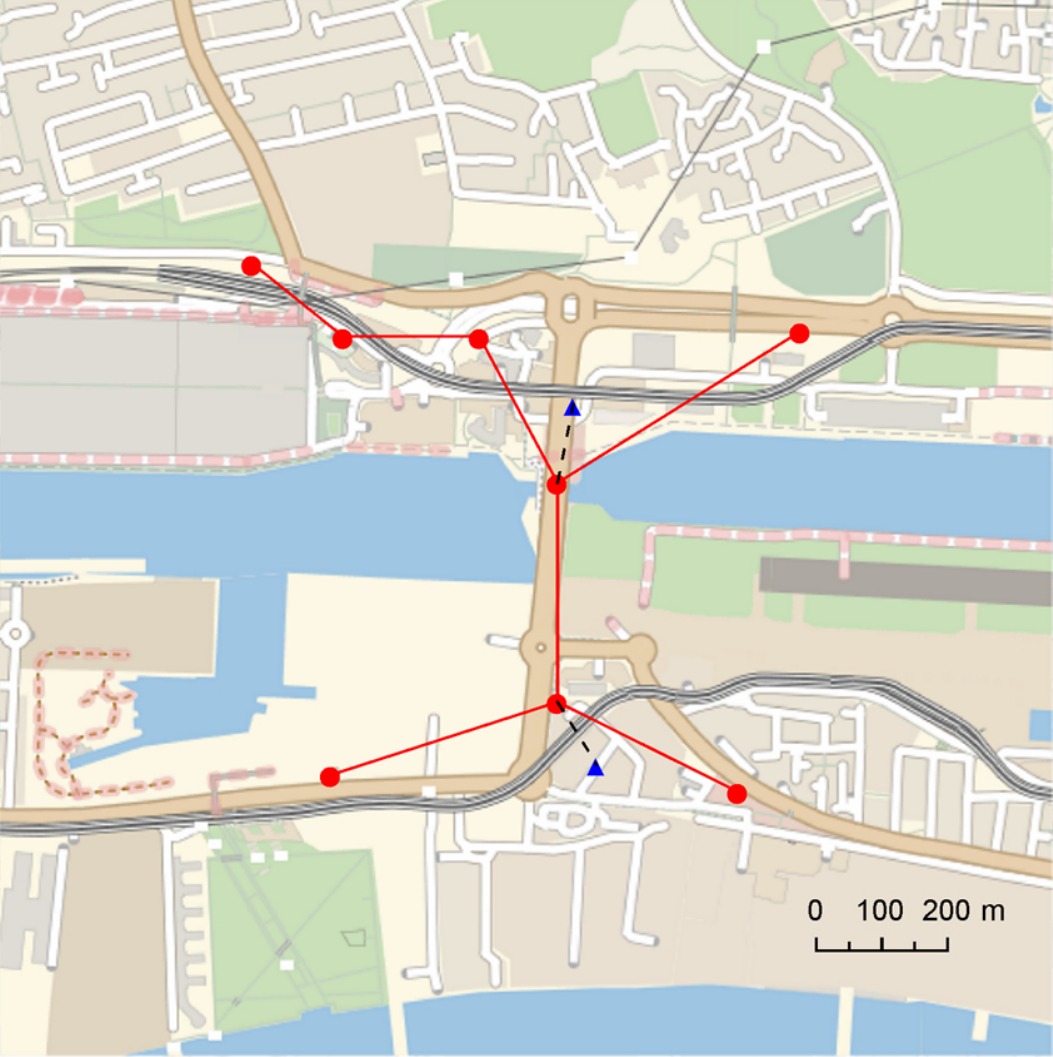}
\caption{Sample site network for the Connaught bridge Crossrail site in London; triangles represent assets.}
\label{fig:overlay}
\end{figure}
It should be also noted that this setup is not limited to a single construction site and the graph may as well represent multiple interconnected sites.

We consider a discrete planning horizon that comprises $T$ periods. At each point in time, an asset $a=1,\ldots,A$ can be found in one and only one node $v\in V$; in practice, on the basis of its GPS location an asset will be associated to the nearest node in the site network (Fig. \ref{fig:overlay}). We assume that asset location $l^a_t\in V$ at each time period $t\in T$ is known with certainty. Each asset $a$ features a fuel tank with capacity $c_a$; fuel consumption $f^a_t\geq0$ for an asset at a given time period $t\in T$ is given and known with certainty. We will relax the assumptions of certainty in Section \ref{sec:problem_sto}. 

Each construction site features a unmovable site cistern where an infinite amount of fuel is assumed to be available; in a multi-site setting we assume, without loss of generality, a single cistern is available for all sites considered. There is a single bowser truck that can be used to refuel assets. The bowser features a tank with capacity $c_b$; when the bowser tank is empty, the bowser must return to the site cistern to refill its tank. We do not model explicitly bowser fuel consumption, since it is unlikely the bowser will run out of fuel in between two visits to the site cistern: the daily distance covered by the bowser (Fig. \ref{fig:bowser_c610_journeys}) comfortably remains within the range a small truck can cover with a full tank. 

We model movements so that the bowser can only move from a node to an adjacent one within a single time period. We assume that refuelling of an asset takes a negligible time in relation to the size of time periods, and that refuelling can be performed if, at a given time period, both the bowser and the asset are located at the same node. 

Our time modeling strategy resembles a ``Large Bucket'' strategy, as found in lot-sizing \citep{citeulike:14071902}. The rationale behind our choice of modeling macro (i.e. 10 minutes to 30 minutes) rather than micro (i.e. real time) periods is related to the fact that, as discussed in Section \ref{sec:telemetry}, asset refuelling operations generally require less than five minutes per asset and therefore fit within a period. Furthermore, the sampling rate of existing telemetry systems is quite low: between 5 minutes (JCB) and 30 minutes (Komatsu) between two readings. 

Even in a deterministic setting, it is unrealistic to require that no asset stocks out, as the problem may admit no solution; we therefore choose to allow asset fuel stock outs. If an asset stocks out of fuel, we enforce a penalty cost $p$ per litre of fuel short at the end of a given period. We assume operations do not stop as a consequence of a fuel shortage; a premium price will be paid to ensure work continuity, e.g. cost of expediting fuel, cost of allocating a spare asset to the task. Our asset therefore operate in a ``lost sales'' setting, i.e. when an asset is out of fuel lost work and associated fuel consumption are not backlogged from one period to the next.

To summarize, the order of events within a given period $t$ in the planning horizon is as follows. At the beginning of a period a bowser replenishment takes place if the bowser is at the cistern node and needs to be replenished; this replenishment is instantaneous. Immediately after, all assets that happen to be at the same node as the bowser are replenished according to the given refuelling plan. Assets then start operating and consume fuel according to the given consumption $f^a_t$. If an asset runs out of fuel, a premium price of $p$ per litre of fuel short is paid to ensure business continuity. At the end of the period all assets, including the bowser, move instantaneously to their next location.

\subsection{A Mixed-Integer Linear Programming model}

We introduce a mixed-integer linear programming (MILP) formulation for the DBRP. Our formulation features five sets of decision variables 
\begin{itemize}
\item $V^i_t$, a binary variable set to one iif, at time $t$, the bowser is at node $i$; 
\item $T_t^{i,j}$ a binary variable set to one iif the bowser transits from node $i$ to node $j$ at the end of period $t$;
\item $Q^a_t$, the nonnegative quantity of fuel delivered to asset $a$ at time $t$; 
\item $B_t$, the nonnegative quantity of fuel transferred from the cistern to the bowser at time $t$; 
\item $S^a_t$, the nonnegative fuel shortage for asset $a$ at time $t$.
\end{itemize}

\begin{figure}
\centering
\fbox{\parbox{14cm}{
\begin{align}
\min ~~ \sum_{t=2}^T \sum_{i=1}^N \sum_{j=1}^N T_{t-1}^{i,j}d_{i,j}+p\sum_{t=1}^T\sum_{a=1}^A S^a_t\label{cons:milp_obj}
\end{align}

{\bf Subject to}
\begin{align}
&B_t \leq V^1_tc_b														&t=1,\ldots,T\label{cons:milp1}\\
&s_b + \sum_{k=1}^t B_k - \sum_{k=1}^{t-1} \sum_{a=1}^A Q^a_k \leq c_b			&t=1,\ldots,T\label{cons:milp2}\\
&s_b + \sum_{k=1}^t B_k - \sum_{k=1}^t \sum_{a=1}^A Q^a_k \geq 0				&t=1,\ldots,T\label{cons:milp3}\\
&\sum_{i=1}^N V^i_t = 1													&t=1,\ldots,T\label{cons:milp4}\\
&\delta_{i,j}\geq V^i_{t-1} + V^j_t -1											&t=2,\ldots,T;~i,j=1,\ldots,N\label{cons:milp5}\\
&\sum_{j=1}^N T_{t-1}^{i,j} = V^i_{t-1}										&t=2,\ldots,T;~i=1,\ldots,N\label{cons:milp6}\\
&T_{t-1}^{i,j}\geq V^i_{t-1}+V^j_{t}-1											&t=2,\ldots,T;~i,j=1,\ldots,N\label{cons:milp7}\\
&T_{t-1}^{i,j}\leq V^i_{t-1}													&t=2,\ldots,T;~i,j=1,\ldots,N\label{cons:milp8}\\
&T_{t-1}^{i,j}\leq V^j_{t}													&t=2,\ldots,T;~i,j=1,\ldots,N\label{cons:milp9}\\
&s_a+\sum_{k=1}^t Q^a_k+\sum_{k=1}^{t-1}S_k^a-\sum_{k=1}^t f^a_k\geq - S_t^a		&t=1,\ldots,T;~a=1,\ldots,A\label{cons:milp10}\\
&s_a+\sum_{k=1}^t Q^a_k+\sum_{k=1}^{t-1}S_k^a-\sum_{k=1}^{t-1} f^a_k\leq c_a		&t=1,\ldots,T;~a=1,\ldots,A\label{cons:milp11}\\
&Q^a_t\leq c_a\sum_{i=1}^N V^i_{t}l^a_{t,i}									&t=1,\ldots,T;~a=1,\ldots,A\label{cons:milp12}\\
\begin{split}
&T_t^{i,j},~V^i_t\in\{0,1\}\\
&Q^a_t,~B_t\geq 0\\
& 0\leq S^a_t\leq f^a_t
\end{split}&\label{cons:milp13}
\end{align}
}}
\caption{An MILP model for the Dynamic Bowser Routing Problem}
\label{model:milp}
\end{figure}

The MILP model is presented in Fig. \ref{model:milp}. The objective function (\ref{cons:milp_obj}) minimises the total distance covered by the bowser plus the penalty cost associated with fuel shortages; note that both these values can be expressed in the same unit of measure, e.g. monetary units.
Constraint (\ref{cons:milp1}) ensures that fuel cannot be transferred from the cistern to the bowser unless the bowser is at node 1. Constraint (\ref{cons:milp2}) enforces bowser capacity. Constraint (\ref{cons:milp3}) introduce inventory conservation constraints for the bowser. Constraint (\ref{cons:milp4}) captures the fact that at each point in time the bowser must be somewhere in the network. Constraint (\ref{cons:milp5}) states that the bowser can transit from node $i$ to node $j$ only if these two are connected. Constraint (\ref{cons:milp6}) establishes that if the bowser is at node $i$ at time t, it must transit somewhere. Constraints (\ref{cons:milp7}, \ref{cons:milp8}, \ref{cons:milp9}) are channeling constraints linking variables $T_t^{i,j}$ and variables $V^i_t$. Asset refuelling and inventory conservation constraints are expressed via constraints (\ref{cons:milp10}) and (\ref{cons:milp11}). The fact that an asset can be refuelled only if both machine and boswer are at the same node is captured by constraint (\ref{cons:milp12}). Finally, constraints \ref{cons:milp13} capture decision variable domains.

\subsection{Valid Inequalities}

We next introduce three sets of valid inequalities \citep{citeulike:14541797} inspired by the discussion in \citep{citeulike:14131453}.

The {\em first set of inequalities} captures the idea that the amount of fuel that can be delivered to assets is constrained by the number of visits the bowser pays to each asset as well as by the asset as well as bowser capacities. 

\begin{lemma}
For $t=1,\ldots,T$ and $a=1,\ldots,A$
\begin{equation}
\sum_{k=1}^t\sum_{i=1}^N V^i_{k}l^a_{k,i} \geq \left(\sum_{k=1}^t f^a_k - s_a - \sum_{k=1}^t S_k^a\right)/\min(c_a, c_b)
\end{equation}
\end{lemma}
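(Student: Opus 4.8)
The plan is to bound the total fuel delivered to asset $a$ over periods $1,\ldots,t$ from above, and then combine this with the fuel balance for the asset to obtain the stated lower bound on the number of bowser visits. First I would observe that, by constraint~(\ref{cons:milp12}), a delivery $Q^a_k>0$ to asset $a$ in period $k$ is possible only if $\sum_{i=1}^N V^i_k l^a_{k,i}=1$, i.e. only in a period when the bowser shares a node with the asset; call such a period a \emph{visit}. Hence the number of visits up to time $t$ is exactly $\sum_{k=1}^t\sum_{i=1}^N V^i_k l^a_{k,i}$ (recall $\sum_i V^i_k l^a_{k,i}\in\{0,1\}$ since the asset occupies a single node and constraint~(\ref{cons:milp4}) forces the bowser to occupy a single node).

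Next I would argue that each single visit can deliver at most $\min(c_a,c_b)$ litres. The asset-side cap is immediate: a single delivery cannot exceed the asset tank capacity $c_a$, which follows from the right-hand bound~(\ref{cons:milp12}) ($Q^a_k\le c_a$) — or, more tightly, from the inventory upper bound~(\ref{cons:milp11}). The bowser-side cap requires that a single transfer cannot exceed the fuel currently on the bowser, which is at most $c_b$ by~(\ref{cons:milp2}); combining gives $Q^a_k\le\min(c_a,c_b)$ in any visited period, and $Q^a_k=0$ otherwise. Summing, $\sum_{k=1}^t Q^a_k \le \min(c_a,c_b)\cdot\sum_{k=1}^t\sum_{i=1}^N V^i_k l^a_{k,i}$.

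It then remains to lower-bound $\sum_{k=1}^t Q^a_k$ using the asset fuel balance. Taking constraint~(\ref{cons:milp10}) at period $t$ and rearranging, $\sum_{k=1}^t Q^a_k \ge \sum_{k=1}^t f^a_k - s_a - \sum_{k=1}^t S^a_k$ (using that the shortage terms $\sum_{k=1}^{t-1}S^a_k + S^a_t$ collapse to $\sum_{k=1}^t S^a_k$). Chaining the two inequalities and dividing by $\min(c_a,c_b)>0$ yields the claim. The only genuinely delicate point is justifying the per-visit bound of $\min(c_a,c_b)$ rather than just $c_a$: I would need to confirm that the bowser inventory available at a visit, $s_b+\sum_{k'\le k}B_{k'}-\sum_{k'<k}\sum_a Q^a_{k'}$, is indeed bounded above by $c_b$ for every relevant period — which is precisely what~(\ref{cons:milp2}) guarantees — so that no valid solution can route more than $c_b$ litres through the bowser in one delivery. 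Once that is in hand the rest is routine arithmetic; the inequality is valid because it is implied by~(\ref{cons:milp2})--(\ref{cons:milp12}) and therefore cuts off no feasible integer point.
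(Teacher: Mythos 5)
Your proposal is correct and follows essentially the same argument as the paper's (much terser) proof: the net fuel that must have been delivered by period $t$ is at least $\sum_{k=1}^t f^a_k - s_a - \sum_{k=1}^t S^a_k$, each visit delivers at most $\min(c_a,c_b)$ litres, and the quotient therefore lower-bounds the visit count. Your version simply makes explicit the constraint-level justifications — (\ref{cons:milp10}) for the delivery lower bound, (\ref{cons:milp12}) and (\ref{cons:milp2})--(\ref{cons:milp3}) for the per-visit cap — that the paper leaves implicit.
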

\begin{proof}
$Q=\sum_{k=1}^t f^a_k - s_a - \sum_{k=1}^t S_k^a$ is the net amount of fuel that has been delivered to the asset by period $t$; the minimum value between asset $a$ tank capacity $c_a$ and the bowser capacity $c_b$ can be used to determine the minimum number of visits required to deliver $Q$, the total number of visits paid must then be greater or equal to this value.
\end{proof}

The {\em second set of inequalities} makes sure that if an asset $a$ has not been visited by the bowser in the time span $i,\ldots,j$, then the sum of asset $a$ refuelling quantities $Q^a_k$ in this time span must be set to zero.
\begin{lemma}
For $i,j=1,\ldots,T$ and $a=1,\ldots,A$
\begin{equation}
M\sum_{n=1}^N \sum_{k=i}^j V^n_{k} l^a_{k,n} \geq \sum_{k=i}^j Q^a_k
\end{equation}
where $M=\sum_{k=1}^T f^a_k$.
\end{lemma}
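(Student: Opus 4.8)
The plan is to reduce the claimed inequality to a short case analysis driven by a single combinatorial observation. First I would note that, for any fixed period $k$, the inner sum $\sum_{n=1}^N V^n_k l^a_{k,n}$ is a $0/1$ quantity: by constraint (\ref{cons:milp4}) the bowser occupies exactly one node at time $k$, and $l^a_{k,n}$ is the given $0/1$ indicator that asset $a$ is at node $n$ at time $k$, so the product sums to $1$ precisely when the bowser and asset $a$ share a node at time $k$ and to $0$ otherwise. Consequently the left-hand side equals $M\nu$, where $\nu:=\sum_{n=1}^N\sum_{k=i}^j V^n_k l^a_{k,n}\in\{0,1,\ldots,j-i+1\}$ counts the periods of the window $[i,j]$ at which a refuelling of asset $a$ is physically possible.

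Case $\nu=0$. For every $k\in\{i,\ldots,j\}$ constraint (\ref{cons:milp12}) forces $Q^a_k\le c_a\cdot 0=0$, and with $Q^a_k\ge 0$ this gives $Q^a_k=0$; hence both sides vanish and the inequality holds with equality.

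Case $\nu\ge 1$. Now the left-hand side is at least $M=\sum_{k=1}^T f^a_k$, so it suffices to prove $\sum_{k=i}^j Q^a_k\le M$, and this is the only step requiring genuine work. The plan is to bound cumulative deliveries by cumulative demand: rearranging the tank-capacity constraint (\ref{cons:milp11}) and using $s_a\ge 0$, $S^a_k\ge 0$ yields $\sum_{k=1}^{t}Q^a_k\le c_a+\sum_{k=1}^{t-1}f^a_k$ for every $t$, so with $t=j$ we get $\sum_{k=i}^j Q^a_k\le\sum_{k=1}^j Q^a_k\le c_a+\sum_{k=1}^{j-1}f^a_k\le c_a+M$. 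For $\nu\ge 2$ this already suffices, since $c_a+M\le 2M\le\nu M$ under the (realistic, and here tacit) assumption $c_a\le M$ that every asset burns at least a full tank over the horizon; for $\nu=1$ a sharper bound is available, because (\ref{cons:milp12}) makes every $Q^a_k$ in the window vanish except at the unique co-location period, where it is at most $c_a\le M$, so $\sum_{k=i}^j Q^a_k\le c_a\le M$ directly.

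The main obstacle is therefore not the bookkeeping — that follows at once from (\ref{cons:milp4}) and (\ref{cons:milp12}) — but isolating the bound $\sum_{k=i}^j Q^a_k\le M$ cleanly: one must invoke (\ref{cons:milp11}) to rule out stockpiling, and one must be explicit that $M$ dominates a single full delivery, i.e. that $c_a\le M$. I would present the $\nu=1$ sub-case first, since it is the tight one and shows why $M=\sum_{k}f^a_k$ (rather than $c_a$) is the natural big-$M$ constant for this family of cuts, and then observe that larger $\nu$ only slackens the inequality.
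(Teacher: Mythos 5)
Your argument is correct in structure, but it proves the bound on $\sum Q^a_k$ by a genuinely different route than the paper, and the difference matters. The paper does not invoke the tank-capacity constraint at all: its justification of the big-$M$ constant is the assertion that the total fuel delivered to asset $a$ over the whole horizon need never exceed its total consumption $\sum_{k=1}^T f^a_k$, so $\sum_{k=i}^j Q^a_k\le M$ outright, and the cut then follows from exactly your $\nu=0$ versus $\nu\ge1$ dichotomy (with (\ref{cons:milp4}) and (\ref{cons:milp12}) doing the work you assign them). That assertion is an optimality-preserving argument --- delivering more fuel than the asset will ever burn is feasible but never useful --- so the inequality is a cut that preserves at least one optimal solution rather than one implied by the constraint set. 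You instead try to derive the bound purely from feasibility via (\ref{cons:milp11}), which forces you to assume $c_a\le M$. That assumption is not made anywhere in the paper and can fail on realistic instances (the test bed uses tanks of 112--235 litres over horizons whose total consumption may be smaller); when $c_a>M$ and $s_a=0$, a feasible solution can deliver $Q^a_{k^*}=c_a>M$ on a single visit, so your $\nu=1$ case genuinely breaks --- and indeed the inequality then cuts off that feasible point, surviving only under the paper's ``never over-deliver'' reading. So either state $c_a\le\sum_{k=1}^T f^a_k$ as an explicit hypothesis, or replace the capacity argument with the observation that $M$ bounds total delivery without loss of optimality. A minor stylistic point: your split between $\nu=1$ and $\nu\ge2$ is unnecessary, since (\ref{cons:milp12}) already gives $Q^a_k\le c_a$ at each co-location period and $Q^a_k=0$ elsewhere, hence $\sum_{k=i}^j Q^a_k\le\nu c_a\le\nu M$ in one line under your assumption.
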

\begin{proof}
These constraints immediately follow from the description above; $M=\sum_{k=1}^T f^a_k$, since the total amount of fuel delivered to an asset should not exceed its total consumption up to time $T$.
\end{proof}

The {\em third set of inequalities}  captures the fact that, if the bowser does not pay a visit to asset $a$ in the time span $j,\ldots,t$, then the net fuel level at period $t$ is independent of $Q^a_j,\ldots,Q^a_t$.
\begin{lemma}
For $a=1,\ldots,A$, $t=1,\ldots,T$ and $j=1,\ldots,t$
\begin{equation}
s_a+
\sum_{k=1}^{j-1} Q^a_k + 
\sum_{k=1}^{t} S_k^a -
\sum_{k=1}^{t} f^a_k \geq
-M\sum_{n=1}^N \sum_{k=j}^t V^n_{k} l^a_{k,n}
\end{equation}
where $M=\sum_{k=1}^t f^a_k$.
\end{lemma}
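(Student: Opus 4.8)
The plan is to dispose of the statement by a short case analysis on the quantity
\[
\Delta \;:=\; \sum_{n=1}^N \sum_{k=j}^t V^n_{k}\, l^a_{k,n},
\]
which counts how many periods of the window $\{j,\ldots,t\}$ see the bowser parked at asset $a$'s node. First I would observe that for each fixed $k$ the vector $l^a_{k,\cdot}$ is a unit vector (asset $a$ occupies exactly one node), while constraint (\ref{cons:milp4}) forces exactly one $V^n_k$ to equal $1$; hence each inner sum $\sum_{n} V^n_k l^a_{k,n}$ lies in $\{0,1\}$, so $\Delta$ is a nonnegative integer and we are either in the case $\Delta \geq 1$ (the bowser visits asset $a$ somewhere in the window) or $\Delta = 0$ (it does not).

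In the case $\Delta \geq 1$, the right-hand side is $-M\Delta \leq -M = -\sum_{k=1}^t f^a_k$, using $M\geq 0$ (since each $f^a_k\geq 0$). On the left-hand side $s_a\geq 0$ and all $Q^a_k, S^a_k \geq 0$ by (\ref{cons:milp13}), so the left-hand side is already bounded below by $-\sum_{k=1}^t f^a_k$, and the inequality holds (with slack). In the case $\Delta = 0$, every inner sum $\sum_n V^n_k l^a_{k,n}$ with $k\in\{j,\ldots,t\}$ vanishes, so constraint (\ref{cons:milp12}) pins $Q^a_k = 0$ for all such $k$; therefore $\sum_{k=1}^{j-1} Q^a_k = \sum_{k=1}^{t} Q^a_k$ and the claimed inequality collapses to
\[
s_a + \sum_{k=1}^{t} Q^a_k + \sum_{k=1}^{t} S^a_k - \sum_{k=1}^{t} f^a_k \;\geq\; 0,
\]
which is exactly the asset inventory-balance constraint (\ref{cons:milp10}) after moving $S^a_t$ to the left-hand side.

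I do not expect a genuine obstacle here — the argument is essentially a bookkeeping verification. The one point requiring a little care is the reduction in the case $\Delta=0$: one must justify that $\Delta=0$ really forces $Q^a_k=0$ throughout the window (this is where constraint (\ref{cons:milp12}) together with the unit-vector property of $l^a_{k,\cdot}$ enters), and then keep the summation index ranges straight ($1,\ldots,j-1$ versus $1,\ldots,t$) when collapsing the expression onto (\ref{cons:milp10}).
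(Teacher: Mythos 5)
Your proof is correct and is essentially the argument the paper gives, just spelled out: the paper's (very terse) justification is precisely that a visit in the window makes the right-hand side at most $-\sum_{k=1}^t f^a_k$, which renders the constraint implied, while no visit forces $Q^a_j=\cdots=Q^a_t=0$ via constraint (\ref{cons:milp12}) so the inequality collapses onto the inventory-balance constraint (\ref{cons:milp10}). Your case analysis on $\Delta$ fills in exactly the bookkeeping the paper leaves implicit.
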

\begin{proof}
These constraints immediately follow from the description above; $M=\sum_{k=1}^t f^a_k$; since a right hand side value of $\sum_{k=1}^t f^a_k$ makes the constraint implied.
\end{proof}

\subsection{Numerical example}\label{sec:numerical_example_1}

We now introduce a running example to support our discussion. We consider a planning horizon of $T=10$ periods and a single site network with $N=10$ nodes. The distance matrix is shown in Table \ref{tab:distance_matrix}. 
\begin{table}[h!]
\centering
\begin{tabular}{rr|rrrrrrrrrr}
\multicolumn{2}{c}{}&\multicolumn{10}{c}{$j$}\\
&&1&2&3&4&5&6&7&8&9&10\\
\hline
\multirow{10}{*}{$i$}&1& & 96& & & 107& & & & & \\
&2&121& & & & & & & & & \\
&3& & 92& & 103& 103& & & 92& & 77\\
&4& & 90& & & & & & & & 91\\
&5& & & & & & 102& & & 126& \\
&6& & & 72& & 139& & 89& & & \\
&7& & & & & & 80& & 83& & \\
&8& & 119& & & & & & & 90& 91\\
&9&83& & & & & & & & & \\
&10& & & & & 79& & & & & \\
\end{tabular}
\caption{Distance matrix representing the distances $d_{i,j}$ for the numerical example; an empty cell denotes a forbidden transit}
\label{tab:distance_matrix}
\end{table}
There are $A=3$ assets with tank capacity $c_a=20$ and initial tank level $s_a=10$ for all $a=1,\ldots,A$. The location of each asset at each time period is illustrated in Table \ref{tab:asset_location}.
\begin{table}[h!]
\centering
\begin{tabular}{r|rrrrrrrrrr}
\multicolumn{1}{c}{}&\multicolumn{10}{c}{Period}\\
		& 1 & 2 & 3 & 4 & 5 & 6 & 7 & 8 & 9 & 10\\
\hline
Asset 1	& 5 & 10 & 1 & 1 & 4 	& 4 & 2	& 6 & 6 & 6\\
Asset 2	& 6 & 1 & 9 & 3 & 2 		& 3 & 7	& 7 & 10 & 5\\
Asset 3	& 10 & 5 & 7 & 5 & 10 	& 2 & 10 	& 10 & 6 & 3
\end{tabular}
\caption{Location (i.e. node index in the site graph) of each asset at each time period.}
\label{tab:asset_location}
\end{table}
Fuel consumption of each asset at each time period is illustrated in Table \ref{tab:fuel_consumption}; the penalty cost per litre of fuel short is $p=100$.
\begin{table}[h!]
\centering
\begin{tabular}{r|rrrrrrrrrr}
\multicolumn{1}{c}{}&\multicolumn{10}{c}{Period}\\
		& 1 & 2 & 3 & 4 & 5 & 6 & 7 & 8 & 9 & 10\\
\hline
Asset 1&4&4&2&1&3&1&4&4&3&3\\
Asset 2&4&2&3&4&3&1&4&2&4&4\\
Asset 3&2&4&1&2&2&4&1&1&2&2
\end{tabular}
\caption{Fuel consumption (in liters) of each asset at each time period.}
\label{tab:fuel_consumption}
\end{table}
Finally, the bowser capacity is $c_b=300$ and the initial bowser level is $s_b=10$. An IBM ILOG OPL implementation of this example is provided in our Electronic Addendum EA1.\footnote{Our Electronic Addendum is available on GitHub \url{https://github.com/gwr3n/dbrp}.}

We solve the MILP model discussed by using IBM ILOG CPLEX Optimization Studio Version 12.6; the optimal solution, which yields a cost of 494, is found in 1.08 seconds on a 2.2GHz Intel Core i7 Macbook Air fitted with 8Gb of RAM. In this specific instance, this figure represents the total distance covered by the bowser is 494 with no fuel shortages. The optimal bowser routing plan is displayed in Fig. \ref{fig:routing_plan} and in Table \ref{tab:routing_plan_small}; the optimal refuelling plan is shown in Fig. \ref{fig:refueling_plan}. 

\begin{figure}[h!]
\begin{minipage}[!t]{0.45\columnwidth}
\centering
\includegraphics[width=1\columnwidth]{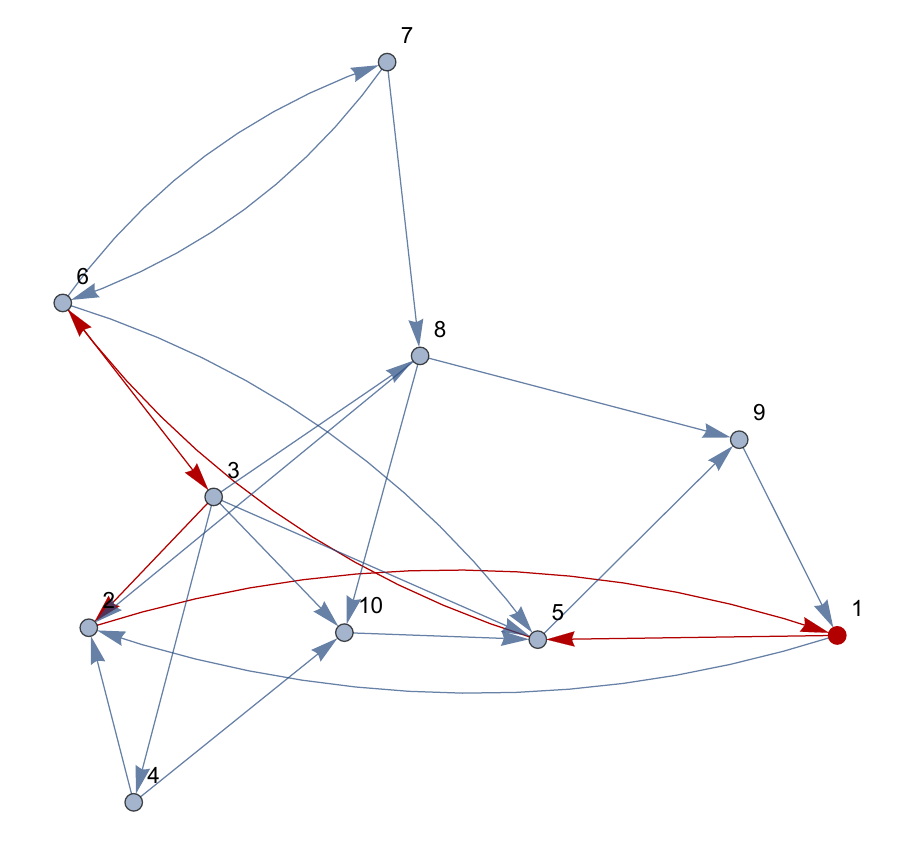}
\caption{Optimal bowser routing plan.}
\label{fig:routing_plan}
\end{minipage}
\hspace{2em}
\begin{minipage}[!t]{0.45\columnwidth}
\centering
\begin{tabular}{llllll}
$t$	&	Transition			\\
\hline
1	&	1$\rightarrow$	1	\\
2	&	1$\rightarrow$	1	\\
3	&	1$\rightarrow$	5	\\
4	&	5$\rightarrow$	6	\\
5	&	6$\rightarrow$	3	\\
6	&	3$\rightarrow$	2	\\
7	&	2$\rightarrow$	1	\\
8	&	1$\rightarrow$	1	\\
9	&	1$\rightarrow$	1			
\end{tabular}
\caption{Optimal bowser routing plan.}
\label{tab:routing_plan_small}
\end{minipage}
\end{figure}
\begin{figure}[h!]
\centering
\includegraphics[width=0.7\columnwidth]{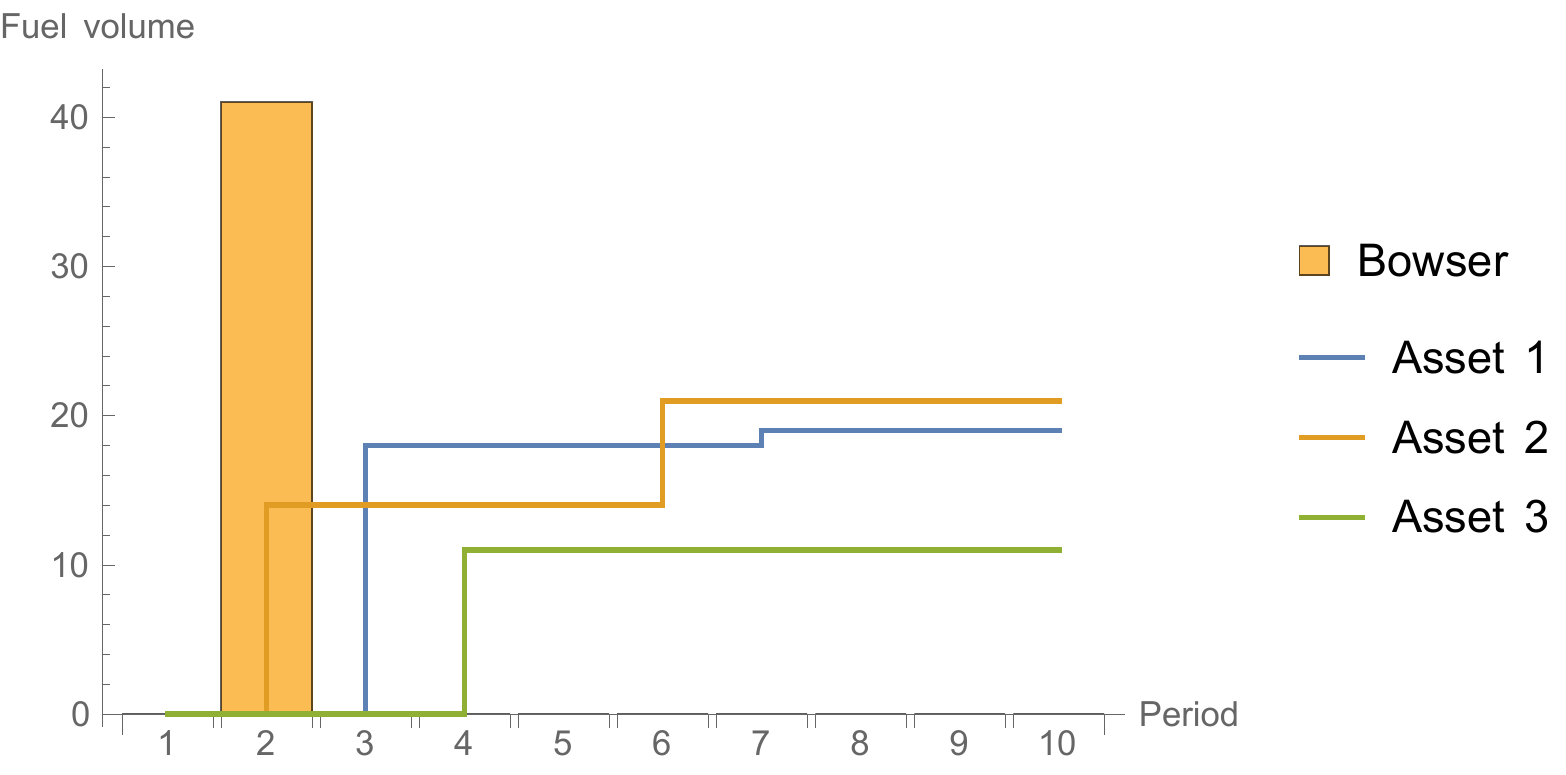}
\caption{Bowser and asset refueling plan.}
\label{fig:refueling_plan}
\end{figure}

\section{The Stochastic Bowser Routing Problem}\label{sec:problem_sto}

In the DBRP asset {\em location} and {\em fuel consumption} throughout the planning horizon are known with certainty. In this section we investigate stochastic variants of the problem; we shall name this problem the Stochastic Bowser Routing Problem (SBRP).

\subsection{A Stochastic Dynamic Programming formulation}\label{sec:problem_sto_sdp}

We now introduce a dynamic programming formulation that can seamlessly capture deterministic as well as stochastic variants of the problem; in other words, this approach makes it possible to relax the assumption of certainty for asset {\em location} or {\em fuel consumption}. This flexibility of course comes at a price; in fact, the curse of dimensionality makes this approach far less scalable than other approaches previously discussed. However, at least for small instances, this formulation can be effectively employed to derive optimal policies and investigate the cost of uncertainty.

Introduced by Bellman in the late Fifties \citep{Bellman:1957} Dynamic Programming is a powerful modelling and solution framework for decision making under uncertainty. To keep our discussion focused, we restrict the discussion to discrete stochastic dynamic programs. To formulate a problem as a stochastic dynamic program the decision maker must define the {\em planning horizon} length, the {\em state space}, the set of {\em feasible actions} for each state of the state space, the {\em transition probability} from one state-action pair to the set of admissible future states and the {\em immediate value} function associated with every given state-action pair. At the core of stochastic dynamic programs we find a ``functional equation'' --- also known as Bellman's equation. In its most general form, a functional equation $v_t(s)$ can be expressed as
\begin{equation}\label{eq:bellman_equation}
v_t(s)=\min_{a\in A_s} \left\{ c^a_s + \sum_{s'} p^a_{s,s'} v_{t+1}(s') \right\}, 
\end{equation}
where $A_s$ denote the set of all feasible actions in state $s$; $c^a_s$ denotes the immediate cost incurred for the state-action pair $\langle s,a\rangle$; and $p^a_{s,s'}$ is the transition probability from state $s$ to state $s'$ when action $a$ is selected. Eq. \ref{eq:bellman_equation} is clearly independent of the problem at hand. Note that, without loss of generality,  the problem is here expressed in cost minimization form. Once all aforementioned problem elements have been defined, one can apply a backward or a forward recursion algorithm to tabulate the functional equation --- a process known in the literature as ``memoization'' \citep{citeulike:2851015} --- and determine an optimal policy.

We next formulate the SBRP as a stochastic dynamic program. 

{\bf Planning horizon:} $T$ periods.

{\bf States:} a state is encoded as a 5-tuple $\langle t, b_{\text{tank}}, b_{\text{loc}}, m_{\text{tank}}, m_{\text{loc}} \rangle$; where $t$ is the period associated with the state, $b_{\text{tank}}$ is the bowser tank level, $b_{\text{loc}}$ is the bowser position in the network, $m_{\text{tank}}$ is an array of $A$ asset tank levels, $m_{\text{loc}}$ is an array of $A$ asset locations.

{\bf Actions:} an action is a 4-tuple $\langle s, b_{\text{ref}}, b_{\text{loc}}', m_{\text{ref}} \rangle$; where $s$ is the state associated with the action, $b_{\text{ref}}$ is the bowser refuelling quantity for the current period, $b_{\text{loc}}'$ is the bowser location in the next period, $m_{\text{ref}}$ is an array of $A$ asset refuelling quantities.  

{\bf Transition probabilities:} the transition probabilities depend on what problem parameters are modelled as random variables. We have developed two variants of the problem: one in which asset movements from period $t$ to period $t+1$ are captured by means of a probability mass function over the nodes of the construction site graph; and another in which an asset fuel consumption in period $t$ is expressed as a generic probability distribution over a discrete support. In both cases, transition probabilities $p^a_{s,s'}$ are immediately obtained from the given distributions.

{\bf Immediate value function:} the immediate value function is given by the distance travelled by the bowser over the arc $(b_{\text{loc}}, \bar{b}_{\text{loc}})$ selected by action $a$, plus the expected penalty cost paid at the end of the period for each unit of fuel short.

Once all problem elements have been defined, the decision maker may tackle the problem numerically by applying established complete approaches such as forward or backward recursion algorithms, or approximate dynamic programming algorithms \citep{NAV:NAV20347}. We implemented the model discussed using an open source general purpose library called \texttt{jsdp}.\footnote{http://gwr3n.github.io/jsdp/}. Our implementation of the DBRP is available in package \texttt{jsdp.app.routing}; this package comprises a deterministic formulation, as well as two stochastic variants of the problem in which {\em asset location} and {\em asset fuel consumption}, respectively, are modelled as random variables. Unfortunately, this approach can only tackle small instances.

In what follows we will present an effective Mixed-Integer Linear Programming heuristic for the case in which asset fuel consumption is random. We leave the investigation of effective heuristics for the random asset location case as future work.

\subsection{A Mixed-Integer Linear Programming heuristic}\label{sec:problem_sto_milp}

We consider an SBRP in which asset $a$ fuel consumption in period $t$ is a random variable $f^a_t$ with known probability distribution over nonnegative support, e.g. Poisson; our aim is to model and solve heuristically this problem.

The SBRP under random fuel consumption is a complex multi-stage stochastic optimisation problem; as mentioned in the previous section, a stochastic dynamic programming approach can only solve small instances. To develop our heuristic, we proceed as follows: we approximate the original multi-stage problem as a two-stage stochastic optimisation problem with complete recourse \citep{citeulike:14541957}; and we then employ a ``receding horizon'' approach \citep{citeulike:14544211} to apply this approximation in the context of the original multi-stage setting.

Formulating the SBRP as a two-stage stochastic optimisation problem effectively means determining, at the beginning of the planning horizon, a refuelling plan comprising the optimal bowser route as well as asset replenishment quantities for all future period of the planning horizon --- note that these decisions are fixed once and for all at the beginning of the planning horizon and they do not depend on random variable realisations. Once the plan is determined, we observe random fuel consumption for all assets and periods and determine the values of recourse variables, which represent the amount of fuel spare/short for each asset at each time period. The optimal plan is the one that minimises travel costs as well as the expected total penalty cost incurred as a consequences of fuel shortages.

To model this problem we introduce the following recourse decision variables: 
\begin{itemize}
\item $[I^a_t]^-$, the expected fuel shortage for asset $a$ at time $t$, where $[I^a_0]^-=\max(-s_a,0)$;
\item $[I^a_t]^+$, the expected fuel inventory for asset $a$ at time $t$, where $[I^a_0]^+=\max(s_a,0)$;
\item $[E^a_t]$, the expected fuel quantity exceeding tank capacity for asset $a$ at time $t$.
\end{itemize}

\begin{figure}
\centering
\fbox{\parbox{14cm}{
\begin{align}
\min ~~ \sum_{t=2}^T \sum_{i=1}^N \sum_{j=1}^N T_{t-1}^{i,j}d_{i,j}+p\sum_{t=1}^T\sum_{a=1}^A [I^a_t]^-\label{cons:sto_milp_obj}
\end{align}

{\bf Subject to}
\begin{align}
&
(\ref{cons:milp1}), 
(\ref{cons:milp2}), 
(\ref{cons:milp3}), 
(\ref{cons:milp4}), 
(\ref{cons:milp5}), 
(\ref{cons:milp6}), 
(\ref{cons:milp7}), 
(\ref{cons:milp8}), 
(\ref{cons:milp9}), 
(\ref{cons:milp12})\nonumber\\
&[I^a_t]^-=
\mathcal{L}^a_{1,\ldots,t}(
s_a+
\sum_{k=1}^t Q^a_k + 
\sum_{k=1}^{t-1}[I^a_k]^- - 
\sum_{k=1}^t [E^a_k])													&t=1,\ldots,T;~a=1,\ldots,A\label{cons:milp14}\\
&[I^a_t]^+=
\widehat{\mathcal{L}}^a_{1,\ldots,t}(
s_a+
\sum_{k=1}^t Q^a_k + 
\sum_{k=1}^{t-1}[I^a_k]^- - 
\sum_{k=1}^t [E^a_k])													&t=1,\ldots,T;~a=1,\ldots,A\label{cons:milp15}\\
&[E^a_t]=\max([I^a_{t-1}]^+ + Q^a_t - c_a, 0)										&t=1,\ldots,T;~a=1,\ldots,A\label{cons:milp16}\\
\begin{split}
&T_t^{i,j},~V^i_t\in\{0,1\}\\
&Q^a_t,~B_t,~[I^a_t]^+,~[I^a_t]^-,~[E^a_t]\geq 0\\
&~0\leq S^a_t\leq f^a_t
\end{split}&\label{cons:milp17}
\end{align}
}}
\caption{An MILP model for the Bowser Routing Problem under stochastic fuel consumption}
\label{model:sto_milp_ce}
\end{figure}

The certainty equivalent MILP formulation of the problem is presented in Fig. \ref{model:sto_milp_ce}. The objective function (\ref{cons:sto_milp_obj}) now minimizes the sum of the total bowser routing cost and the expected total penalty cost incurred for fuel shortages.

Constraints 
(\ref{cons:milp1}), 
(\ref{cons:milp2}), 
(\ref{cons:milp3}), 
(\ref{cons:milp4}), 
(\ref{cons:milp5}), 
(\ref{cons:milp6}), 
(\ref{cons:milp7}), 
(\ref{cons:milp8}), 
(\ref{cons:milp9}), 
(\ref{cons:milp12}) 
are those originally discussed for the DBRP and do not change. However, asset refuelling and inventory conservation constraints (\ref{cons:milp10}) and (\ref{cons:milp11}) must be adapted in order to take into account the fact that $f^a_t$ is now a random variable. 

To compute the expected fuel shortage $[I^a_t]^-$ for asset $a$ at time $t$ as well as the expected fuel inventory $[I^a_t]^+$ for asset $a$ at time $t$, we leverage the first order loss function $\mathcal{L}^a_{1,\ldots,t}(Q)$ and the complementary first order loss function $\widehat{\mathcal{L}}^a_{1,\ldots,t}(Q)$, respectively. These are defined as follows
\begin{equation}
\widehat{\mathcal{L}}^a_{1,\ldots,t}(Q)=\sum_{k=0}^Q(Q-k)g_{1,\ldots,t}(k)
\end{equation}
\begin{equation}
\mathcal{L}^a_{1,\ldots,t}(Q)=\sum_{k=Q}^{\infty}(k-Q)g_{1,\ldots,t}(k)
\end{equation}
where $g_{1,\ldots,t}$ is the probability mass function of $f^a_1+\ldots+f^a_t$; these expressions are easily generalised to the case in which random variables are continuous \citep{citeulike:13075114}.

Our system operates under a ``lost sales'' setting; by construction, $\mathcal{L}^a_{1,\ldots,t}(Q)$ represents the sum of expected shortages observed in periods $1,\ldots,t$ if the initial fuel level is $Q$. 
\begin{lemma}
Let $\mathcal{L}^a_t(Q)$ represent expected shortages observed in period $t$, then
\[\mathcal{L}^a_t(Q)=\mathcal{L}^a_{1,\ldots,t}(Q)-\mathcal{L}^a_{1,\ldots,t-1}(Q);\] 
similarly, 
\[\widehat{\mathcal{L}}^a_t(Q)=\widehat{\mathcal{L}}^a_{1,\ldots,t}(Q)+\mathcal{L}^a_{1,\ldots,t-1}(Q).\] 
\end{lemma}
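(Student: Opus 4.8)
The plan is to prove both identities by tracking, for a fixed realisation of the consumption sequence $f^a_1,f^a_2,\ldots$, the physical on-hand fuel and the cumulative shortage of a lost-sales system that enters period~$1$ with $Q$ units and receives no further deliveries, and only then taking expectations. The key preliminary fact is that, under lost sales, the on-hand fuel at the end of period $\tau$ equals $\bigl(Q-\sum_{k=1}^{\tau}f^a_k\bigr)^+$. I would prove this by induction on $\tau$: the case $\tau=0$ is immediate, and the inductive step rests on the elementary identity $\max(\max(x,0)-y,0)=\max(x-y,0)$, which holds for every $y\ge 0$ and is applied with $y=f^a_\tau\ge 0$. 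Writing $D_\tau=\sum_{k=1}^{\tau}f^a_k$, it follows that the fuel consumed over periods $1,\ldots,t$ is $Q-(Q-D_t)^+=\min(Q,D_t)$, hence the cumulative shortage over periods $1,\ldots,t$ is $D_t-\min(Q,D_t)=(D_t-Q)^+$; taking expectations recovers $\mathcal{L}^a_{1,\ldots,t}(Q)=\mathbb{E}[(D_t-Q)^+]$, consistent with the interpretation of $\mathcal{L}^a_{1,\ldots,t}$ recalled just before the lemma.

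Given this, the first identity is essentially a telescoping observation. Pathwise, the shortage incurred in period $t$ alone is the cumulative shortage through period $t$ minus that through period $t-1$, that is $(D_t-Q)^+-(D_{t-1}-Q)^+$; a short case analysis on the position of $Q$ relative to $D_{t-1}$ and $D_t$ confirms that this indeed equals the lost demand $\max(f^a_t-(Q-D_{t-1})^+,0)$ of period $t$. Taking expectations and invoking linearity yields $\mathcal{L}^a_t(Q)=\mathcal{L}^a_{1,\ldots,t}(Q)-\mathcal{L}^a_{1,\ldots,t-1}(Q)$.

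For the second identity I would proceed ``similarly'', now with the complementary quantity. The expected fuel inventory attributed to period $t$ in the model's lost-sales accounting is the physical on-hand fuel at the end of period $t$ with the demand lost during periods $1,\ldots,t-1$ added back --- this is precisely the bookkeeping convention underlying constraints (\ref{cons:milp14})--(\ref{cons:milp16}), whose loss-function arguments carry the term $\sum_{k=1}^{t-1}[I^a_k]^-$. Pathwise this quantity is $(Q-D_t)^+ + (D_{t-1}-Q)^+$, so taking expectations gives $\widehat{\mathcal{L}}^a_t(Q)=\widehat{\mathcal{L}}^a_{1,\ldots,t}(Q)+\mathcal{L}^a_{1,\ldots,t-1}(Q)$. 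Alternatively, one can reach the same conclusion without invoking accounting conventions by combining the first identity with the elementary relation $(Q-X)^+=(Q-X)+(X-Q)^+$ and telescoping the resulting expressions; I would present whichever derivation reads more cleanly.

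The only non-routine step, and hence the main obstacle, is the pathwise reduction in the preliminary fact: one has to verify that the nested maxima generated by successive lost-sales periods collapse to the single expression $(Q-D_\tau)^+$ --- this is exactly where nonnegativity of fuel consumption is used --- and one has to be careful about which periods' lost demand is added back when defining $\widehat{\mathcal{L}}^a_t$, so that the complementary identity ends with ``$+\,\mathcal{L}^a_{1,\ldots,t-1}(Q)$'' rather than with ``$+\,\mathcal{L}^a_{1,\ldots,t}(Q)$''. Everything beyond that is linearity of expectation and a finite case split.
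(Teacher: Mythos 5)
Your argument is correct and is, at bottom, the same one the paper gives: the paper's entire proof is the single line ``follows from linearity of expectation,'' and your pathwise lost-sales accounting (cumulative shortage through period $t$ equals $(\sum_{k\le t}f^a_k-Q)^+$, so per-period quantities telescope, then take expectations) is exactly the calculation that one-liner is gesturing at, just written out in full. The only point where you go beyond the paper --- pinning down the convention that $\widehat{\mathcal{L}}^a_t(Q)$ adds back the demand lost in periods $1,\ldots,t-1$, which is what produces the ``$+\,\mathcal{L}^a_{1,\ldots,t-1}(Q)$'' term and matches the $\sum_{k=1}^{t-1}[I^a_k]^-$ correction in constraints (\ref{cons:milp14})--(\ref{cons:milp15}) --- is a clarification the paper leaves implicit, not a different route.
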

\begin{proof}
Follows from linearity of expectation.
\end{proof}

\begin{lemma}\label{lemma_5}
We can approximate $\mathcal{L}^a_t(Q)$ and $\widehat{\mathcal{L}}^a_t(Q)$ as follows
\[\mathcal{L}^a_t(Q)\approx\mathcal{L}^a_{1,\ldots,t}(Q+\mathcal{L}^a_{1,\ldots,t-1}(Q));\] 
\[\widehat{\mathcal{L}}^a_t(Q)\approx\widehat{\mathcal{L}}^a_{1,\ldots,t}(Q+\mathcal{L}^a_{1,\ldots,t-1}(Q)).\] 
\end{lemma}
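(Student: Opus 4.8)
The plan is to derive both approximations from the \emph{exact} identities of the preceding lemma by replacing a single random quantity with its mean, and then to bound the resulting error so as to justify the word ``approximate''. Throughout I will write $F^a_t:=f^a_1+\cdots+f^a_t$ for the cumulative consumption, use the expectation form of the loss functions, $\mathcal{L}^a_{1,\ldots,t}(Q)=\mathbb{E}\big[(F^a_t-Q)^+\big]$ and $\widehat{\mathcal{L}}^a_{1,\ldots,t}(Q)=\mathbb{E}\big[(Q-F^a_t)^+\big]$ (which agrees with the summation definition in the text and extends it to real arguments), and use that consumption is nonnegative, so $F^a_t\geq F^a_{t-1}\geq 0$ almost surely.

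First I would rewrite the exact identity $\mathcal{L}^a_t(Q)=\mathcal{L}^a_{1,\ldots,t}(Q)-\mathcal{L}^a_{1,\ldots,t-1}(Q)$ in ``lost-sales'' form. A short case analysis, according to whether $Q$ lies below $F^a_{t-1}$, between $F^a_{t-1}$ and $F^a_t$, or above $F^a_t$, shows that pointwise in the sample $(F^a_t-Q)^+-(F^a_{t-1}-Q)^+=\big(F^a_t-Q-(F^a_{t-1}-Q)^+\big)^+$, hence
\[
\mathcal{L}^a_t(Q)=\mathbb{E}\Big[\big(F^a_t-Q-(F^a_{t-1}-Q)^+\big)^+\Big],
\]
i.e. the period-$t$ expected shortage equals the expected unmet consumption of period $t$ when the fuel available at the start of that period is the (random) residual $\big(Q-F^a_{t-1}\big)^+$. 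The approximation is then simply to replace this random residual shortfall $(F^a_{t-1}-Q)^+$ by its mean $\mathcal{L}^a_{1,\ldots,t-1}(Q)$; since $\mathbb{E}\big[(F^a_t-Q-c)^+\big]=\mathcal{L}^a_{1,\ldots,t}(Q+c)$ for any constant $c$, this yields exactly $\mathcal{L}^a_t(Q)\approx\mathcal{L}^a_{1,\ldots,t}\big(Q+\mathcal{L}^a_{1,\ldots,t-1}(Q)\big)$. The complementary case is obtained by the same device: starting from $\widehat{\mathcal{L}}^a_t(Q)=\widehat{\mathcal{L}}^a_{1,\ldots,t}(Q)+\mathcal{L}^a_{1,\ldots,t-1}(Q)=\mathbb{E}\big[(Q-F^a_t)^+\big]+\mathcal{L}^a_{1,\ldots,t-1}(Q)$ and absorbing the additive constant into the argument of $\widehat{\mathcal{L}}^a_{1,\ldots,t}$, i.e. replacing $(Q-F^a_t)^++c$ by $(Q+c-F^a_t)^+$ with $c=\mathcal{L}^a_{1,\ldots,t-1}(Q)$, one gets $\widehat{\mathcal{L}}^a_t(Q)\approx\widehat{\mathcal{L}}^a_{1,\ldots,t}\big(Q+\mathcal{L}^a_{1,\ldots,t-1}(Q)\big)$.

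To quantify the approximation I would use the elementary $1$-Lipschitz estimate $\big|(u-a)^+-(u-b)^+\big|\leq|a-b|$. For the first approximation this yields
\[
\Big|\mathcal{L}^a_t(Q)-\mathcal{L}^a_{1,\ldots,t}\big(Q+\mathcal{L}^a_{1,\ldots,t-1}(Q)\big)\Big|\;\leq\;\mathbb{E}\big|(F^a_{t-1}-Q)^+-\mathcal{L}^a_{1,\ldots,t-1}(Q)\big|,
\]
the mean absolute deviation of the cumulative shortfall about its own mean; for the complementary one a similar computation bounds the error by $\mathcal{L}^a_{1,\ldots,t-1}(Q)\cdot\mathbb{P}(F^a_t>Q)$. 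In particular both bounds are zero when $F^a_{t-1}\leq Q$ almost surely, i.e. whenever no cumulative shortfall has yet arisen by period $t-1$, and the base case $t=1$ is exact. Since the shortage penalty $p$ is chosen large relative to travel cost, an optimal plan keeps these shortfall probabilities small, so the errors are second order; this is also the precise sense in which the recursion (\ref{cons:milp14})--(\ref{cons:milp16}) represents $[I^a_t]^-$ as a per-period expected shortage, with $\sum_{k=1}^{t-1}[I^a_k]^-$ playing the role of $\mathcal{L}^a_{1,\ldots,t-1}(Q)$.

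The part I expect to be delicate is not the one-step algebra above but two adjacent points: first, pinning down the operational meaning of $\widehat{\mathcal{L}}^a_t(Q)$ in a lost-sales system --- it is an inventory level inflated by the accumulated expected shortage rather than the plain expected on-hand fuel of period $t$ --- so that the identity inherited from the preceding lemma is read off correctly; and second, arguing that when this single-step approximation is iterated through the model over $t=1,\ldots,T$ with replenishments $Q^a_k$ raising the effective initial level period by period, the per-period errors do not compound unfavourably. I would therefore state the lemma at the level of a single step, prove the two error bounds above, and defer the error-propagation question to the computational study, where the MILP heuristic is benchmarked against the exact stochastic dynamic program of Section~\ref{sec:problem_sto_sdp}.
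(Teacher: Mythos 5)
Your argument is correct and arrives at the stated approximations, but it takes a genuinely different --- and considerably more explicit --- route than the paper. The paper's own proof is qualitative: it appeals to the limiting two-segment piecewise-linear shape of the (complementary) loss function (slopes $0$ and $1$, nonnegative support) together with the lost-sales intuition that shortages are reset each period, concluding that inflating $Q$ by the accumulated expected shortage is the natural correction; no exact identity and no error bound are offered. You instead establish the exact pointwise lost-sales identity $(F^a_t-Q)^+-(F^a_{t-1}-Q)^+=\bigl(F^a_t-Q-(F^a_{t-1}-Q)^+\bigr)^+$ (which is correct --- the three-case check goes through), recognise the approximation as a single certainty-equivalence step in which the random residual shortfall $(F^a_{t-1}-Q)^+$ is replaced by its mean $\mathcal{L}^a_{1,\ldots,t-1}(Q)$, and supply Lipschitz error bounds --- the mean absolute deviation of the cumulative shortfall for $\mathcal{L}^a_t$, and $\mathcal{L}^a_{1,\ldots,t-1}(Q)\cdot\mathbb{P}(F^a_t>Q)$ for $\widehat{\mathcal{L}}^a_t$ --- which identify exactly when the approximation is exact ($t=1$, or $F^a_{t-1}\le Q$ almost surely) and why it is accurate when the penalty $p$ keeps shortfall probabilities small. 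What your version buys is quantitative control and a clean isolation of the one approximating step; what the paper's buys is brevity and a direct link to the piecewise-linearisation machinery used in the implementation. Two points you flag are worth retaining: the paper defines the loss functions by sums over integer arguments, so your passage to the expectation form (valid for real arguments) is genuinely needed for $Q+\mathcal{L}^a_{1,\ldots,t-1}(Q)$ to be a legitimate argument, and it is consistent with the paper's remark on the continuous generalisation; and the error-propagation question across periods that you defer is likewise left by the authors to the empirical comparison against stochastic dynamic programming in Section~\ref{sec:stochastic_exp}.
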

\begin{proof}
These two approximations are justified by the structure of the loss function. Consider the limiting case of a complementary loss function bounded from below by a two-segment piecewise linear approximation \citep[see][p. 495 - Fig. 1]{citeulike:13075114} in which the first segment has slope 0 and the second has slope 1; the approximation here presented follows naturally by recalling that fuel consumption is defined on a nonnegative support. A similar argument applies to the loss function. In a ``lost sales'' setting shortages are reset at the end of every period; the intuition behind our approximation is to increase $Q$ by an amount equal to the sum of expected shortages observed in previous periods.
\end{proof}

\begin{lemma}\label{lemma_6}
The expected fuel quantity exceeding tank capacity for asset $a$ at time $t$ is
\[[E^a_t]=\max([I^a_{t-1}]^+ + Q^a_t - c_a, 0).\] 
\end{lemma}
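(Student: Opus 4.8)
The plan is to derive the identity by tracking the expected on-hand fuel level of asset $a$ through the sequence of events within period $t$, as described by the timeline at the end of Section~\ref{sec:problem_det}, and then reading off $[E^a_t]$ as the amount by which the post-refuelling level exceeds the physical tank capacity $c_a$. Note first that, unlike in the deterministic DBRP where constraint~(\ref{cons:milp11}) forbids the tank level from ever exceeding $c_a$, in the stochastic formulation this hard cap is dropped, so an overflow can genuinely occur and must be accounted for explicitly; this is the role of the variable $[E^a_t]$.

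First I would pin down the meaning of the certainty-equivalent state: $[I^a_{t-1}]^+$ is (modelled as) the expected quantity of fuel physically present in the tank of asset $a$ at the \emph{end} of period $t-1$, i.e.\ after period $t-1$'s consumption has been realised. Under the lost-sales assumption the shortage $[I^a_{t-1}]^-$ is reset and does not carry forward, and the overflow $[E^a_{t-1}]$ has already been discarded, so no other fuel enters period $t$. Hence the expected level of the tank immediately before the period-$t$ refuelling is exactly $[I^a_{t-1}]^+$.

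Second, the delivery $Q^a_t$ is a first-stage variable, hence deterministic, and by the order of events it is added at the start of period $t$ before any consumption occurs; so the notional post-refuelling level is $[I^a_{t-1}]^+ + Q^a_t$. A tank stores at most $c_a$ litres, so the part of this quantity exceeding $c_a$ cannot be kept and is lost --- that lost part is by definition $[E^a_t]$. A two-case split according to whether $[I^a_{t-1}]^+ + Q^a_t$ is at most or greater than $c_a$ yields $[E^a_t]=\max([I^a_{t-1}]^+ + Q^a_t - c_a,0)$, and subtracting precisely this amount in the cumulative argument of the first-order loss functions appearing in the inventory-balance constraints is what makes subsequent periods see the correctly capped level.

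The single delicate point --- and the only reason this is an approximation rather than an exact identity --- is the interchange of expectation with $\max$. Writing $\tilde I^a_{t-1}$ for the random end-of-period on-hand level, whose expectation is modelled by $[I^a_{t-1}]^+$, the true expected overflow is $\mathbb{E}\big[(\tilde I^a_{t-1}+Q^a_t-c_a)^+\big]$, which by convexity of $x\mapsto x^+$ is at least $([I^a_{t-1}]^+ + Q^a_t - c_a)^+$. I would justify taking the latter as the modelled value on the same grounds as in the proof of Lemma~\ref{lemma_5}: it is the internally consistent choice for a certainty-equivalent model that already replaces every random fuel stream by its expectation, and it coincides with the exact value in the limiting piecewise-linear regime discussed there. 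This interchange is the main (essentially the only) obstacle; everything else follows directly from the event ordering and the capacity bound.
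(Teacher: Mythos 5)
Your proposal is correct and follows essentially the same route as the paper's one-line proof: the lost-sales reset makes $[I^a_{t-1}]^+$ the expected fuel level entering period $t$, and since $Q^a_t$ and $c_a$ are constants the overflow formula follows by capping at capacity. Your explicit acknowledgement that the formula interchanges expectation with $\max$ --- so that it is a certainty-equivalent approximation (a lower bound on the true expected overflow, by Jensen) rather than an exact identity --- is a point the paper's proof leaves implicit, but it is a refinement of the same argument rather than a different one.
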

\begin{proof}
Under a ``lost sales'' setting shortages are reset at the end of every period, hence the expected initial fuel level at period $t$ is $[I^a_{t-1}]^+$; $Q^a_t$ and $c_a$ are constant.
\end{proof}

Constraints (\ref{cons:milp14}) and (\ref{cons:milp15}) are obtained by applying Lemma \ref{lemma_5} and by reducing the available fuel by the sum of expected fuel quantities exceeding tank capacity for asset $a$ at times $1,\ldots,t$. Constraint (\ref{cons:milp16}) follows directly from Lemma \ref{lemma_6}. 

Constraints (\ref{cons:milp14}) and (\ref{cons:milp15}) can be easily implemented via piecewise linearization techniques presented in \citep{citeulike:13075114,rossi14} and via the \texttt{piecewise} command in IBM ILOG OPL \citep{ibm_ilog_opl}. Constraint (\ref{cons:milp16}) can be implemented using the IBM ILOG OPL \texttt{maxl} command.

Note that our approximation is similar to the one adopted in \citep{citeulike:14344640} to model expected waste in perishable inventory control.

\subsection{Numerical example}

We consider the same numerical example presented in Section \ref{sec:numerical_example_1}. However, fuel consumption in each period is now random. More specifically, fuel consumptions in different periods are independently distributed random variables that follow a Poisson distribution with mean values presented in Table \ref{tab:fuel_consumption}. We solve this instance by using the MILP approach presented in the previous section; we employ a piecewise linearisation comprising 8 segments. 

By comparing Table \ref{tab:routing_plan_small} and Table \ref{tab:routing_plan_small_stochastic} It is easy to see that the optimal routing plan did not change with respect to the deterministic problem; its cost is therefore still 494. However, by contrasting Fig. \ref{fig:refueling_plan} and Fig. \ref{fig:refueling_plan_stochastic} it is apparent that the optimal refuelling plan has changed. Moreover, since now fuel consumption is stochastic, we do expect to observe shortages. An overview on expected shortages is shown in Table \ref{tab:fuel_shortages}; the expected total amount short is 1.6831. Since the penalty cost is 100, the expected total cost predicted by the MILP model --- comprising bowser routing cost and expected fuel shortage cost is $494+100(1.6831)=662.3$. The expected total cost obtained via Monte Carlo simulation (500 replications) is 654.8 --- 95.0\% confidence interval for mean (student): (633.194,   676.406). Ideally, one may want to compare this cost against the expected total cost of the optimal plan obtained via stochastic dynamic programming. However, this instance is already too large to be solved to optimality. In Section \ref{sec:stochastic_exp} we will carry out a comprehensive numerical study on smaller instances to investigate the quality of our approximation.

\begin{figure}[h!]
\begin{minipage}[!t]{0.45\columnwidth}
\centering
\includegraphics[width=1\columnwidth]{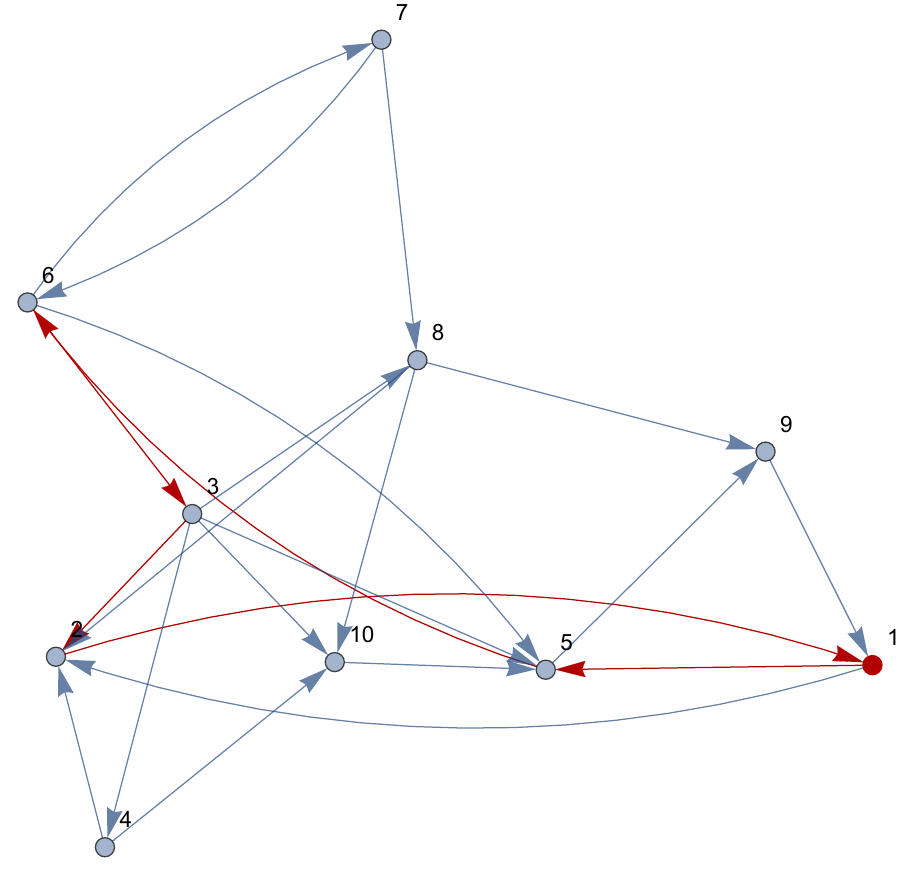}
\caption{Optimal bowser routing plan.}
\label{fig:routing_plan_stochastic}
\end{minipage}
\hspace{2em}
\begin{minipage}[!t]{0.45\columnwidth}
\centering
\begin{tabular}{llllll}
$t$	&	Transition			\\
\hline
1	&	1$\rightarrow$	1	\\
2	&	1$\rightarrow$	1	\\
3	&	1$\rightarrow$	5	\\
4	&	5$\rightarrow$	6	\\
5	&	6$\rightarrow$	3	\\
6	&	3$\rightarrow$	2	\\
7	&	2$\rightarrow$	1	\\
8	&	1$\rightarrow$	1	\\
9	&	1$\rightarrow$	1			
\end{tabular}
\caption{Optimal bowser routing plan.}
\label{tab:routing_plan_small_stochastic}
\end{minipage}
\end{figure}
\begin{figure}[h!]
\centering
\includegraphics[width=0.7\columnwidth]{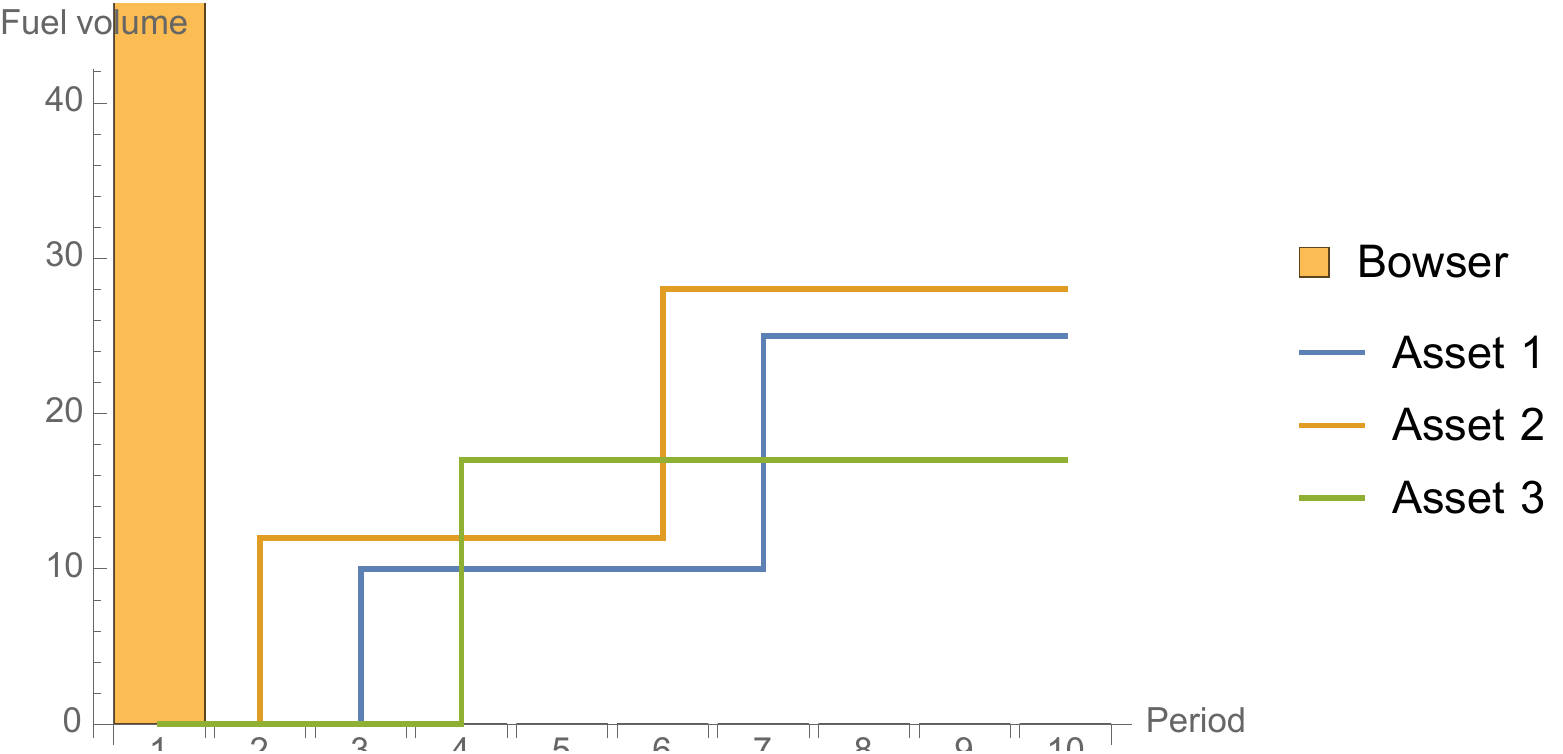}
\caption{Bowser and asset refueling plan.}
\label{fig:refueling_plan_stochastic}
\end{figure}

\begin{table}[h!]
\centering
\begin{tabular}{r|rrrrrrrrrr}
Period	& 1 & 2 & 3 & 4 & 5 & 6 & 7 & 8 & 9 & 10 \\
\hline
Asset 1	& 0.027		&0.340		&0.032		&0.038		&0.039	&0.037		&0.049	&0.051	&0.058	&0.066\\
Asset 2	& 0.027		&0.035		&0.032		&0.040		&0.043	&0.040		&0.050	&0.048	&0.055	&0.066\\
Asset 3	&0.022		&0.034		&0.153		&0.032		&0.038	&0.041		&0.043	&0.045	&0.045	&0.053
\end{tabular}
\caption{Expected fuel shortages; the expected total amount short is 1.6831.}
\label{tab:fuel_shortages}
\end{table}

An IBM ILOG OPL implementation of this example is provided in our Electronic Addendum EA2.

\section{Computational Experience}\label{sec:computational}

In this section we present our computational study for the deterministic (Section \ref{sec:deterministic_exp}) and the stochastic (Section \ref{sec:stochastic_exp}) settings. All experiments involving MILP models were performed on a MacBook Air 2.2 GHz Intel Core i7 8 GB of RAM. The optimisation environment used was IBM ILOG CPLEX Optimization Studio Version 12.6 with standard settings and a time limit of 10 minutes (600 sec). Experiments involving stochastic dynamic programming were performed on an Intel(R) Xeon(R) @ 3.5GHz with 16Gb of RAM; the library used was \texttt{jsdp}.\footnote{The code employed in our experiments is available on \url{http://gwr3n.github.io/jsdp/}.}

\subsection{Dynamic Bowser Routing Problem}\label{sec:deterministic_exp}

In this section we contrast the computational efficiency of the MILP model presented in Section \ref{sec:problem_det} to tackle the DBRP. We consider two variants of the model embedding or not valid inequalities presented. We next introduce our test bed and then present our results.

\subsubsection{Test bed}\label{sec:test_bed_det}


Fuel bowsers come in different sizes and forms, from towable tanks whose capacity ranges from 500lt to 2000lt, to tanker trucks that may reach up to 15000lt. However, due to space constraints, a large tanker truck is unlikely to be deployed within a building site. Generally, these trucks are used to carry fuel to a site cistern, and then smaller towable fuel tanks or tankers are deployed on site. For this reason we consider three levels of bowser capacity: 500lt, 1000lt, and 2000lt. Asset tank capacity vary in size depending on the type of the asset. We will consider the following assets: the JCB 540-170 telehandler, which fits a 125lt tank; the JCB 531-70 telehandler, which fits a 146lt tank; the JCB JS130, a 13 tons excavator, which fits a 235lt tank; the JCB 86C-1 mini excavator, which fits a 112lt tank. Asset initial fuel levels are uniformly distributed between 0 and 20\% of an asset tank capacity. Fuel consumption in each period of the planning horizon is randomly generated for each asset by following the distributions in Table \ref{tab:fuel_consumption}, which we obtained from our analysis of JCB LiveLink\textsuperscript{TM} asset consumption data.

\begin{figure}
\centering
\includegraphics[width=0.9\columnwidth]{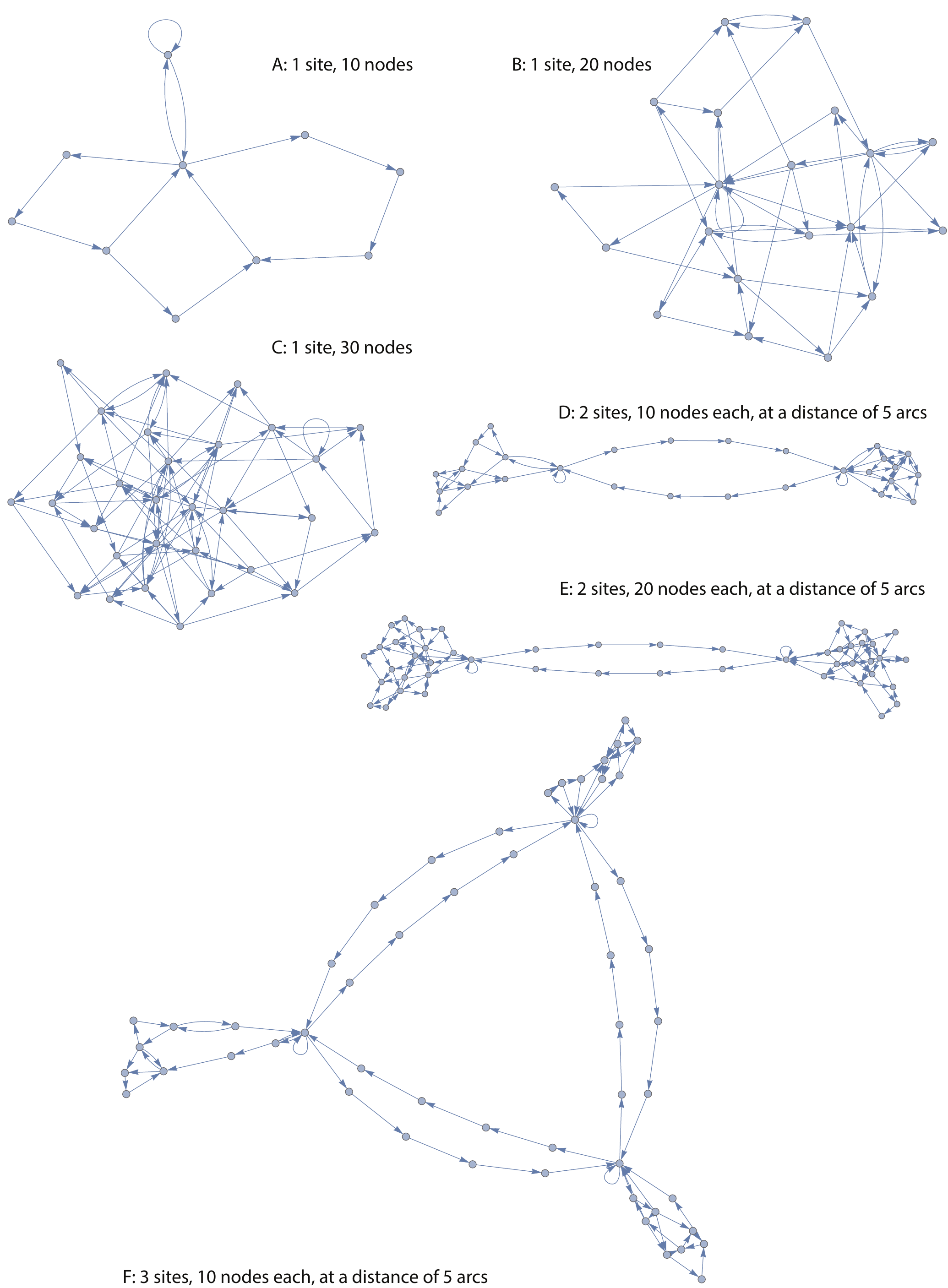}
\caption{Site topologies considered in our DBRP numerical study.}
\label{fig:topologies}
\end{figure}

We consider a test bed comprising a total of 108 instances generated as follows: the planning horizon covers $T=50$ periods. The bowser initial tank capacity takes values $c_b\in\{500,1000,2000\}$. The topologies considered in our test bed are shown in Fig. \ref{fig:topologies}; these include networks including 1, 2, and 3 sites. Site networks have been generated as Bernoulli graphs constructed starting with a complete directed graph with $n\in\{10,20,30\}$ vertices and selecting each edge independently through a Bernoulli trial with probability $p=0.1$; this process is repeated until a connected graph is obtained. Since, as discussed in Section \ref{sec:problem_det}, nodes represent adjacent accessible locations in building sites, these graphs are relatively sparse. Arc lengths are randomly generated from a normal distribution with mean 100 meters and standard deviation 20 meters. On each site we deploy a number of assets that ranges in the set $\{5,10,15\}$ --- assets on each site are randomly picked from asset types listed above. Fuel shortage penalty cost take values $p\in\{100,500\}$. The experimental design is full factorial. To ensure replicability; a complete set of IBM ILOG data files for the test bed is provided in our Electronic Addendum EA3.

\subsubsection{Results}

In what follows, we shall refer to the MILP model presented in Section \ref{sec:problem_det} as ``MP;'' the MILP model augmented with valid inequalities will be referred to as ``MPVI.''

Average key performance indicators recorded by CPLEX are presented in Table \ref{tab:cplex_kpi_det}. There are a total of 108 instances in our test bed, MP solved 84 of them to optimality --- i.e. 24 instances timed out at 600 sec), MPVI solved 90 of them to optimality --- i.e. 18 instances timed out at 600 sec. The average solution time (including timeouts) is 217 sec for MP and 122 sec for MPVI; the average time difference between MP and MPVI is charted in Fig. \ref{fig:average_time_det}. 
The average number of nodes explored is 4898 for MP and 319 for MPVI; the average node difference between MP and MPVI is charted in Fig. \ref{fig:average_node_det}. 
The average number of simplex iterations is almost 800k for MP and about 150k for MPVI; the average simplex iterations difference between MP and MPVI is charted in Fig. \ref{fig:average_iterations_det}. 
For those instances that either MP or MPVI could not solve to optimality, the average optimality gap at timeout is 45.3\% for MP and 33.4\% for MPVI; the average optimality gap difference between MP and MPVI is charted in Fig. \ref{fig:average_gap_det}. This analysis demonstrates the effectiveness of valid inequalities introduced in Section \ref{sec:problem_det}.

\begin{figure}[h!] 
\centering
\begin{minipage}[b]{0.45\linewidth}

\begin{filecontents}{dataTimeDet.csv}
dist
-0.241345882
-1.161471128
4.027247906
68.77340889
369.2714617
229.9632907
-1.237114906
-1.463701963
54.9709332
-5.082977057
30.55407691
-0.347186804
-2.592503071
-7.37137413
537.1875579
321.2616999
-0.442313194
-0.588381052
-0.500281096
-1.180277824
4.103183985
158.8128438
347.4305038
159.5859241
-1.397976875
-1.352927208
60.64261293
-1.960827112
10.5304091
-0.341513157
-2.740952253
-5.206568003
545.489326
261.677
-0.345783949
-0.51091814
-0.465793133
-0.948482752
3.937783003
98.06819224
240.926456
339.918539
-1.290045023
-1.188143969
49.38311291
-4.816281796
56.48143601
-0.337620974
-2.555881023
-4.89489603
543.5491769
257.3879769
-0.449125051
-0.472847939
-0.443609953
-0.669349909
5.550316334
83.64651966
524.301127
286.506511
-1.261780977
-1.883219004
84.76749706
-2.813781977
45.1140759
-0.360353947
-2.499682188
-2.736661911
513.1577711
176.7940471
-0.433809996
-0.641640902
-0.613215208
-0.70149684
4.742668152
243.0193172
391.3615732
339.3342011
-1.352011919
-2.028061867
146.6731441
-2.594721079
43.37021303
-0.309764862
-2.57374692
-2.43316102
509.882364
280.7199759
-0.304855824
-0.632840872
-0.407458782
-0.642009974
10.08613396
62.7148037
543.4074709
207.5594821
-1.374760866
-1.575354815
130.5620351
-7.050767183
72.37269115
-0.275166035
-2.733304262
-2.052258253
574.563292
340.6874251
-0.296941996
-0.504564047
\end{filecontents}

\resizebox{\columnwidth}{!}{
\begin{tikzpicture}
\begin{axis}[
    ybar,
    ymin=0,
    xtick={0,200,400,600},
    xlabel={sec}, ylabel={instances}
]
\addplot +[
    hist={
        bins=10,
        data min=-10,
        data max=600
    },
    color=black,
    fill=black
] table [y index=0] {dataTimeDet.csv};
\end{axis}
\end{tikzpicture}
}
\caption{Time difference (sec) between DBRP and DBRP\_VI}
\label{fig:average_time_det}

\end{minipage} 
\hspace{0.05\linewidth}
\begin{minipage}[b]{0.45\linewidth}

\begin{filecontents}{dataNodesDet.csv}
dist
0
0
86
8519
3174
16608
0
87
810
902
863
7433
0
172
5193
22685
2439
7429
0
0
96
17436
2727
7899
0
87
2072
1310
750
9557
0
131
4705
19653
4161
6605
0
0
83
10606
2979
17824
0
87
1387
556
938
13556
0
129
4779
22542
5598
6862
0
22
109
8469
7827
15716
0
180
2442
1611
1730
10988
0
238
3880
13802
3945
5806
0
22
136
21482
7898
18607
0
172
3624
1228
1607
11157
0
269
3484
21259
4085
6624
0
22
215
6111
9547
10495
0
172
2951
918
1695
8811
0
244
2866
18908
4639
6965
\end{filecontents}

\resizebox{\columnwidth}{!}{
\begin{tikzpicture}
\begin{axis}[
    ybar,
    ymin=0,
    xlabel={nodes}, ylabel={instances}
]
\addplot +[
    hist={
        bins=7,
        data min=0,
        data max=23000
    },
    color=black,
    fill=black
] table [y index=0] {dataNodesDet.csv};
\end{axis}
\end{tikzpicture}
}
\caption{Difference in \# explored nodes between DBRP and DBRP\_VI}
\label{fig:average_node_det}

\end{minipage} 

\vspace{2em}

\begin{minipage}[b]{0.45\linewidth}

\begin{filecontents}{dataIterationsDet.csv}
dist
14
66
8399
506639
843186
1182890
85
3899
192053
77359
261602
1725628
-36
9227
1339707
2458266
951397
1629562
14
66
9077
1146292
815771
889100
85
3885
198025
84017
165381
1793028
-36
5051
1283022
1917245
848512
1832360
14
66
7875
756894
650779
1418700
51
3885
154888
45889
298214
2154938
-36
6332
1209969
1803530
1134498
1787766
14
365
19379
569040
1598487
1618727
42
6021
226319
87187
305441
1794040
-36
13602
1064647
1126200
1250820
1841096
14
365
19832
1682363
1322605
1559548
42
6340
410360
73628
397053
1985054
-36
13370
1180926
2128207
968732
1555398
14
365
33608
477499
1593813
863309
42
6320
336292
71228
442381
1449699
-36
12041
940905
2238899
657723
2036285
\end{filecontents}

\resizebox{\columnwidth}{!}{
\begin{tikzpicture}
\begin{axis}[
    ybar,
    ymin=0,
    xlabel={iterations}, ylabel={instances}
]
\addplot +[
    hist={
        bins=7,
        data min=0,
        data max=3000000
    },
    color=black,
    fill=black
] table [y index=0] {dataIterationsDet.csv};
\end{axis}
\end{tikzpicture}
}
\caption{Difference in \# simplex iterations between DBRP and DBRP\_VI}
\label{fig:average_iterations_det}

\end{minipage} 
\hspace{0.05\linewidth}
\begin{minipage}[b]{0.45\linewidth}

\begin{filecontents}{dataGapDet.csv}
dist
22.59140415
15.24616225
60.03980977
15.35489814
26.63231328
28.48650792
38.3762336
4.996029827
11.61726551
16.56078467
35.59704881
15.98655973
6.650280108
14.60708447
35.10548078
9.441277359
35.03291228
27.33073899
0.35494546
7.434211601
5.172346438
28.74801291
24.18410896
1.120456972
\end{filecontents}

\resizebox{\columnwidth}{!}{
\begin{tikzpicture}
\begin{axis}[
    ybar,
    ymin=0,
    xlabel={gap (\%)}, ylabel={instances}
]
\addplot +[
    hist={
        bins=7,
        data min=0,
        data max=70
    },
    color=black,
    fill=black
] table [y index=0] {dataGapDet.csv};
\end{axis}
\end{tikzpicture}}
\caption{Difference between DBRP and DBRP\_VI optimality gap (\%)}
\label{fig:average_gap_det}

 \end{minipage} 
\end{figure}

\begin{table}[h!]
\centering
\begin{tabular}{lrrrrrr}
				&\multicolumn{2}{c}{time (sec)}	&\multicolumn{2}{c}{nodes}	&\multicolumn{2}{c}{simplex iterations}\\
				&MP			&MPVI		&MP			&MPVI		&MP			&MPVI	\\
\hline
Topology\\
A				&	0.32	&	1.78			&	0		&	0	&	167		&	156	\\
B				&	1.97	&	4.16			&	113		&	0	&	5697		&	627	\\
C				&	232	&	22.0			&	2208		&	46	&	503413	&	23675	\\
D				&	173	&	43.2			&	11484	&	484	&	1034887	&	76533	\\
E				&	386	&	237			&	4247		&	547	&	1076786	&	270875	\\
F				&	509	&	422			&	11334	&	837	&	2165064	&	547446	\\
\hline
Assets\\
5				&	157		&	26.2			&	5696	&	287	&	595398	&	51087	\\
10				&	135		&	114			&	2759	&	268	&	575213	&	165201	\\
15				&	358		&	224			&	6238	&	402	&	1222397	&	243367	\\
\hline
Penalty ($p$)\\
50				&	209		&	122			&	4782	&	309	&	767015	&	144513	\\
100				&	224		&	121			&	5014	&	329	&	828323	&	161924	\\
\hline													
Overall\\			
				&	217		&	122			&	4898		&	319	&	797669	&	153218	\\
\hline
\end{tabular}
\caption{Pivot table comparing average key performance indicators for MP and MPVI}
\label{tab:cplex_kpi_det}
\end{table}

\subsection{Stochastic Bowser Routing Problem}\label{sec:stochastic_exp}

In this section we investigate the effectiveness and scalability of the MILP heuristic presented in Section \ref{sec:problem_sto_milp} for the SBRP. Effectiveness will be investigated by comparing the simulated cost of the ``true'' optimal refueling plan obtained via stochastic dynamic programming against the simulated cost of the plan obtained via our MILP heuristic; in addition, we will investigate the cost performance of this heuristic under a ``receding horizon'' implementation setting \citep{citeulike:14544211}. Scalability will be investigated by tackling once more the test bed discussed in Section \ref{sec:test_bed_det} under the original random asset fuel consumption. We next introduce our test bed and then present our results.

\subsubsection{Test bed}\label{sec:test_bed_sto}

We consider a problem over a planning horizon of $T=5$ periods. The bowser capacity is set to 20 and the bowser initial tank level is 0. We consider 3 assets each of which features a tank with capacity 6; there are three initial tank level (ITL) configurations: $s^1=\{0,0,0\}$, $s^2=\{3,0,5\}$, and $s^3=\{5,5,5\}$, where $s^k_a$ is the initial tank level of asset $a$ in the $k$-th configuration. Fuel consumption of asset $k$ in period $t$ follows a Poisson distribution with mean $\lambda^k_t$ which has been truncated by fixing a maximum consumption of $7$.\footnote{Probability masses have been normalised to ensure total probability over the truncated support is 1.} We consider three possible consumption patterns (CP): in the first pattern, $\lambda^k_t=3$ for all assets and periods; in the second pattern, $\lambda^1_t=2$, $\lambda^2_t=1$, $\lambda^3_t=3$, for all $t=1,\ldots,T$; in the third pattern, $\lambda^1=\{1,2,3,4,5\}$, $\lambda^2=\{5,4,3,2,1\}$, $\lambda^3=\{3,3,1,1,2\}$, where $\lambda^k_t$ is the $t$-th entry of $\lambda^k$. We consider two possible values for the fuel stockout penalty cost $p=\{50,100\}$. There are six possible site topologies as shown in Fig. \ref{fig:topologies_stochastic} randomly generated in line with what we previously discussed; arc lengths are randomly generated from a normal distribution with mean 100 and standard deviation 20. We utilise 5 segments in our piecewise linearisation.
\begin{figure}
\centering
\includegraphics[width=0.9\columnwidth]{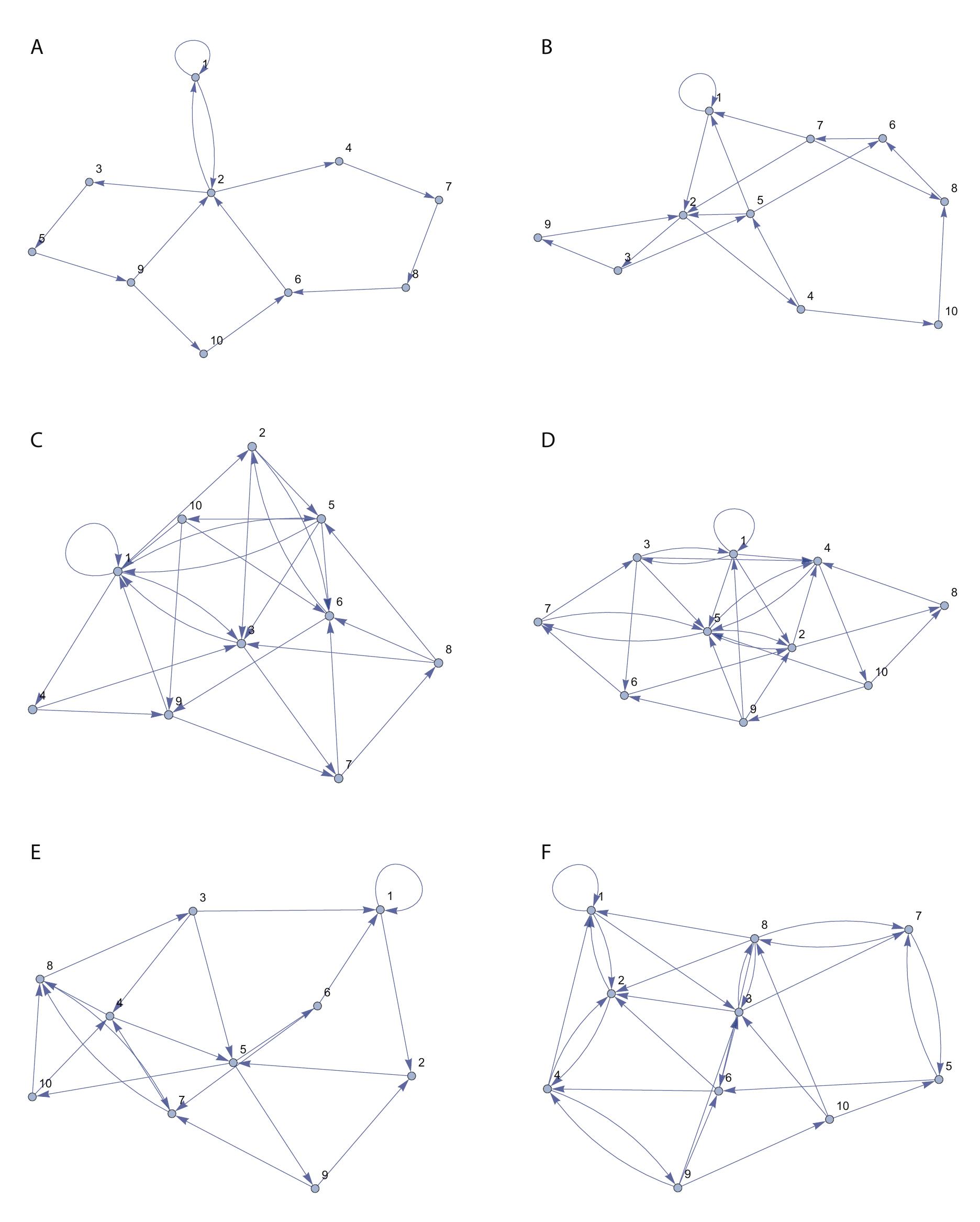}
\caption{Site topologies considered in our SBRP numerical study.}
\label{fig:topologies_stochastic}
\end{figure}
Our test bed comprises a total of 108 instances; the experimental design is full factorial.\footnote{The stochastic dynamic programming code employed in our experiments is available on \url{http://gwr3n.github.io/jsdp/}.}

\subsubsection{Results}

We first investigate the {\bf effectiveness} of the MILP heuristic presented in Section \ref{sec:problem_sto_milp} for the SBRP.  We do so by analysing two key performance indicators: the {\em linearisation gap} produced by our model, that is the difference between the expected total cost predicted by the MILP model and the expected total cost estimated by Monte Carlo simulation; and the {\em optimality gap} between the simulated cost of the refuelling plan suggested by our model and the cost of an optimal plan obtained via stochastic dynamic programming. 

In addition to investigating the cost of implementing a ``here-and-now'' plan (HN), which is a plan that fixes the bowser route and all asset replenishment quantities at the beginning of the planning horizon, we also investigate the performance of our heuristic in a ``receding horizon'' (RH) setting \citep{citeulike:14544211}. Under this setting, at any period $t=1\ldots,T$ we solve our MILP model over planning horizon $t,\ldots,T$, but we only implement time $t$ decisions --- i.e. refuelling of assets and bowser movement that occur at time $t$. After implementing these decisions, we observe fuel consumption for period $t$ and in period $t+1$ we solve again our MILP model over planning horizon $t+1,\ldots,T$ by taking into account the impact of previously observed realisations. This process continues until we have reached the end of the planning horizon. Since RH is computationally cumbersome, we limit Monte Carlo simulation to 500 replications with common random numbers \citep{citeulike:14544227}. 

Table \ref{tab:sdp_kpi_sto} report results over the test bed here investigated; the average {\em linearization gap} for our MILP model is 7.71\%, the average {\em optimality gap} is 7.59\% (HN) and 5.18\% (RH). Average solution time for the complete stochastic dynamic programming (SDP) approach is 2640 sec, while it is 0.13 sec for our MILP heuristic. These results show that our MILP approximation is effective and that its performance is enhanced under a RH strategy. 

\begin{table}[h!]
\centering
\begin{tabular}{lrrr|rr}
	&	\multicolumn{3}{c}{Gap}					&	\multicolumn{2}{c}{Time (sec)}			\\
	&	linearization	&	HN	&	RH	&	SDP	&	MILP	\\
Topology											\\
A	&	5.14	&	5.80	&	5.54	&	950	&	0.16	\\
B	&	9.20	&	13.1	&	5.51	&	2068	&	0.14	\\
C	&	6.86	&	6.69	&	4.94	&	3850	&	0.13	\\
D	&	7.00	&	5.04	&	4.49	&	2661	&	0.08	\\
E	&	8.92	&	7.53	&	5.30	&	3742	&	0.17	\\
F	&	9.16	&	7.33	&	5.29	&	2569	&	0.08	\\
\hline											
Initial tank level											\\
ITL1	&	6.55	&	4.53	&	4.02	&	1898	&	0.11	\\
ITL2	&	7.33	&	7.47	&	5.39	&	2647	&	0.12	\\
ITL3	&	9.25	&	10.78	&	6.13	&	3375	&	0.15	\\
\hline											
Consumption pattern											\\
CP1	&	5.87	&	6.01	&	4.04	&	2639	&	0.10	\\
CP2	&	8.88	&	5.96	&	2.36	&	2653	&	0.17	\\
CP3	&	8.39	&	10.82	&	9.14	&	2629	&	0.11	\\
\hline											
Penalty											\\
100	&	6.87	&	7.34	&	5.06	&	2645	&	0.12	\\
500	&	8.56	&	7.85	&	5.30	&	2635	&	0.13	\\
\hline											
Overall											\\
	&	7.71	&	7.59	&	5.18	&	2640	&	0.13	\\
\hline	
\end{tabular}
\caption{Optimality and linearization gap assessment for our SBRP MILP heuristic}
\label{tab:sdp_kpi_sto}
\end{table}

We next investigate {\bf scalability}. The MILP heuristic presented in Section \ref{sec:problem_sto_milp} for the SBRP is not as scalable as the MILP model presented in Section \ref{sec:problem_det} for the DBRP. We have run experiments on a test bed generated in line with the discussion in Section \ref{sec:test_bed_det}; however, we have now considered a shorter planning horizon comprising $T=10$ periods --- given a time bucket size of 20-30 minutes, this roughly corresponds to a three- to five-hour plan. As previously discussed, on each site we deploy a number of assets that ranges in the set $\{5,10,15\}$; these are randomly picked from asset types previously listed. For each asset type, fuel consumption in each period of the planning horizon follows the distribution in Table \ref{tab:fuel_consumption}. Once more, to ensure replicability; a complete set of IBM ILOG data files for the test bed is provided in our Electronic Addendum EA4. Results of this study are presented in Table \ref{tab:cplex_kpi_sto}. All instances could be solved within the given time limit of 600 seconds. Average solution time was 8.21 second. There is probably scope for improvement via more efficient hardware and solver setups, since we limited our study to default CPLEX setting. Future research may investigate dedicated branch-and-cut strategies similar to those discussed in \citep{tunc_et_al_2018}, which have been shown to boost scalability of MILP model based on piecewise linearisation strategies like the one here adopted.

\begin{table}[h!]
\centering
\begin{tabular}{lrrrrrr}
				&time (sec)	&nodes	&simplex iterations\\
\hline
Topology\\
A	&	2.20	&	4286	&	9530	\\
B	&	2.83	&	5465	&	12131	\\
C	&	1.44	&	1416	&	5932	\\
D	&	8.53	&	49396	&	80924	\\
E	&	1.93	&	2447	&	8698	\\
F	&	32.3	&	76087	&	132668	\\
\hline
Assets\\
5	&	2.40	&	3755	&	9938\\
10	&	16.8	&	39472	&	69368\\
15	&	5.41	&	26321	&	45636\\
\hline
Penalty ($p$)\\
50	&	14.9	&	44482	&	76571	\\
100	&	1.52	&	1884	&	6724	\\
\hline													
Overall\\			
	&	8.21	&	23183	&	41647	\\
\hline
\end{tabular}
\caption{Pivot table with average key performance indicators for our SBRP MILP heuristic}
\label{tab:cplex_kpi_sto}
\end{table}

\section{Related works}\label{sec:related_works}

The origins of the ``Truck Dispatching Problem,'' a generalization of the Traveling Salesman Problem, date back to the seminal work by \cite{citeulike:6888144}. Since the early days, a sizeable literature developed on the so-called Vehicle Routing Problem (VRP), whose aim is to dispatch a fleet of vehicles on a given network to serve a set of customers while meeting a number of constraints. A comprehensive discussion on the VRP can be found in \citep{citeulike:12046141,citeulike:14131446}. 

Equally fundamental in production economics is the literature on inventory control. Pioneering works in this area were carried out by \cite{citeulike:14131448}, who introduced the concept of ``economic order quantity,'' and \cite{citeulike:14131449}, who discussed the first lot-sizing algorithm for a finite-horizon inventory system subject to dynamic demand. Lot-sizing models are surveyed in \citep{citeulike:14131467,citeulike:14131470}; recent developments in stochastic lot sizing are surveyed in \citep{citeulike:12394813}.

The field of Inventory Routing (IR), whose origins can be traced back to the work of \cite{citeulike:14131451}, encompasses problems which combine vehicle routing and inventory management decisions. In IR optimisation is delegated to a central entity that jointly optimises all decisions. Recent surveys in the area include \citep{citeulike:12261199,citeulike:6240342,citeulike:14131452}. The first exact approach to the Inventory Routing Problem (IRP) was proposed by \cite{citeulike:14131453}, who considered the single-vehicle case. Computationally efficient approaches to this problem were discussed in \citep{citeulike:14131455,citeulike:10228296}. Approaches for the multi-vehicle case include \citep{citeulike:10647329,citeulike:11425703,citeulike:13938310}. 
Stochastic IRP problems, in which customer demand is modeled as a random variable, include the seminal work by \cite{10.2307/170651}, and a number of more recent contributions \citep{10.2307/3088529,citeulike:12252336,citeulike:12252318,citeulike:12252334,citeulike:6399125,citeulike:12252325,citeulike:7614853,citeulike:12252352}.

The DBRP, discussed in this work, presents a number of similarities and differences with the classical IRP and its existing variants presented in the literature. Like the classical IRP the DBRP features a warehouse (the fuel cistern), a truck shipping an ``item'' (the fuel), and a number of recipients (the assets) for such item. Like retailers in the IRP, assets hold inventory (the fuel) that is depleted over time. The key difference from existing works in the IRP literature is the fact that assets are free to move in the network that represents our construction site layout; this means that an asset location may change over time and that an asset will move across the network over time together with its inventory. 

Works on Probabilistic Traveling Salesman Problem \citep{citeulike:7695005} and Stochastic Vehicle Routing Problem \citep{citeulike:4008511} have considered situations in which a customer may be present at a given node of the network with a prescribed probability, but we are not aware of works investigating the case in which the very same customer, which holds inventory that may need to be replenished, moves from one node to another over time in a deterministic or stochastic fashion.

A number of other works in the literature share similarities with our study, although in different application domains. We hereby provide a survey of what we believe are the most relevant application domains; for each domain, we try to stress the key differences from our problem.

\citep{citeulike:14147392,citeulike:4008511} investigated arc routing problems motivated by winter gritting applications where the ``timing'' of each intervention is crucial and service cost increases if intervention is carried out outside the prescribed time window. Our model also feature similar ``timing'' constraint related to asset fuel stockout, but in road gritting, as in classical IRP, assets that are replenished (i.e. gritted roads) do not move from one period to the next.

\citep{citeulike:14147415,citeulike:14147417,citeulike:14147413} represent a sample of works in a stream of literature that gained substantial momentum in recent times: electric vehicle refueling. In electric vehicle refueling assets do move over time from one node of the road network to the next together with their battery, which can be seen as energy inventory. However, refueling stations have a fixed location and the problem typically addresses a strategic decision whose aim is to determine where to position refueling stations. 

Rebalancing in bike sharing \citep{citeulike:14147432,citeulike:14147430} is another problem that attracted considerable interest in recent time. In this problem, the aim is to utilise one or more trucks in order to regularly rebalance the number of bikes located at self-service stations. The ``pick-up and delivery'' nature of this problem clearly differentiates it from our problem.

Moving our attention to robotics, recently \citep{citeulike:14544206,citeulike:14544208,citeulike:14544091} investigated stochastic collection and replenishment of agents motivated by use cases in mining and agricultural settings in which a replenishment agent transports a resource between a centralised replenishment point to agents using the resource in the field. They employ Gaussian approximations to quickly calculate the risk-weighted cost of a schedule; a branch and bound search then exploits these predictions to minimise the downtime of the agents. Previous works in this area mainly focused on scheduling the actions of the dedicated replenishment agent from a short-term and deterministic angle. Scenario 2 \citep[see][p. 59]{citeulike:14544091} present similarities to our setup; this emphasises the practical relevance of our study. However, our modeling and solution framework has the advantage of relying solely on MILP modeling and not on ad-hoc algorithms to predict future asset resource levels. Moreover, the discussion in \cite{citeulike:14544091} assumes that uncertain parameters and variables are normally distributed, while our approach based on piecewise linearisation of loss functions does not require this assumption and can accommodate any distribution.

Finally, our problem can be seen as a simplified version of the Aerial Fleet Refueling Problem (AFRP) \citep{citeulike:14147444} in which the aim is to schedule deployment of tankers and receiver aircraft, located at diverse geographical locations, in support of immediate and anticipated military operations. In contrast to the DBRP, the AFRP features a complex multicriteria and hierarchical objective function that does not simply minimize the distance covered by the tanker aircraft. Furthermore, this problem features a substantial number of additional constraints related to safety of crew and aircrafts. A number of more recent works investigate the Aerial Refueling Scheduling Problem (ARSP) \citep{citeulike:9840687}, whose aim is to determine the refueling completion times for fighter aircrafts on multiple tankers in order to minimize the total weighted tardiness. The ARSP however does not consider the topology of the theatre of operations.

\section{Conclusion}\label{sec:con}

In this work we focussed on efficient refuelling of assets across construction sites. We introduced the Bowser Routing Problem and discussed its deterministic (DBRP) and stochastic (SBRP) variants. For each of these variants, we developed mathematical programming models that, by leveraging data supplied by different assets, schedule refuelling operations by balancing the cost of dispatching a bowser truck with that of incurring fuel shortages. To enhance computational performances, we discussed valid inequalities for our models. We carried out  a comprehensive set of experiments on a testbed designed around data derived from our experience at Crossrail sites. In a deterministic setting, this study shows that our mathematical programming models can tackle instances of realistic size in reasonable time --- generally ranging from seconds to minutes; valid inequalities on average halve computational time and dramatically reduce explored nodes and simplex iterations required to reach an optimal solution. In the stochastic case, our mathematical programming heuristic is effective and produces tight linearisation (approx. 7\%) as well as optimality (approx. 5\%) gaps when contrasted against optimal solutions obtained via stochastic dynamic programming for small instances. Computationally, our heuristic provides reasonable performances. An interesting direction for future research is to investigate more effective reformulations for this latter MILP heuristic.

\section*{Acknowledgements}

This work has been developed in the context of project ReCCEL, which was supported by Innovate UK grant number 55202 - 418137.

\bibliographystyle{plainnat}
\bibliography{publications}

\newpage

\section*{Appendix I}
In Fig. \ref{fig:aemp_xml} we provide a sample output obtained from the JCB LiveLink system, which relies on the AEMP standard v1.2.
\begin{figure}[h!]
\scriptsize
\lstset{language=XML}
\begin{lstlisting}
<?xml version="1.0" encoding="utf-8"?>
<Fleet version="0" snapshotTime="2016-06-18T16:02:32.2804358Z" 
xmlns="http://schemas.aemp.org/fleet" 
xmlns:xsd="http://www.w3.org/2001/XMLSchema" 
xmlns:xsi="http://www.w3.org/2001/XMLSchema-instance">
<Equipment>
<EquipmentHeader>
<UnitInstallDateTime>2015-04-20T12:38:25.07</UnitInstallDateTime>
<Make>JCB</Make>
<Model>JS130</Model>
<EquipmentID>Axxxxxx</EquipmentID>
<SerialNumber>xxxxxxx</SerialNumber>
</EquipmentHeader>
<Location datetime="2016-06-18T11:37:59.807">
<Latitude>52.7990309</Latitude>
<Longitude>-2.2744561</Longitude>
</Location>
<CumulativeOperatingHours datetime="2016-06-18T11:37:58">
<Hour>P28DT7H</Hour>
</CumulativeOperatingHours>
<FuelUsed datetime="2016-06-18T11:37:58">
<FuelUnits>liter</FuelUnits>
<FuelConsumed>4902</FuelConsumed>
</FuelUsed>
<Distance datetime="2016-06-18T11:37:58">
<OdometerUnits>kilometer</OdometerUnits>
<Odometer>0</Odometer>
</Distance>
</Equipment>
</Fleet>
\end{lstlisting}
\caption{Sample AEMP v1.2 XML output}
\label{fig:aemp_xml}
\end{figure}

\newpage

\section*{Appendix II}

In Fig. \ref{fig:AssetsFuelConsumption_1} and Fig. \ref{fig:AssetsFuelConsumption_2} we report fuel consumption heatmaps and average hourly consumption for a range of assets deployed on Crossrail sites between the 1st of June and the 30th of June 2016; for each asset, we highlight the representative time window that we employed in our distribution fitting exercise (Table \ref{tab:fuel_consumption}); note that assets are presented in the same order followed in Table \ref{tab:fuel_consumption}, the first asset in Table \ref{tab:fuel_consumption} is not included here, since its fuel consumption heatmap and average hourly fuel consumption graph have been presented in the main text.

\begin{figure}[h!]
\centering
\includegraphics[width=1\columnwidth]{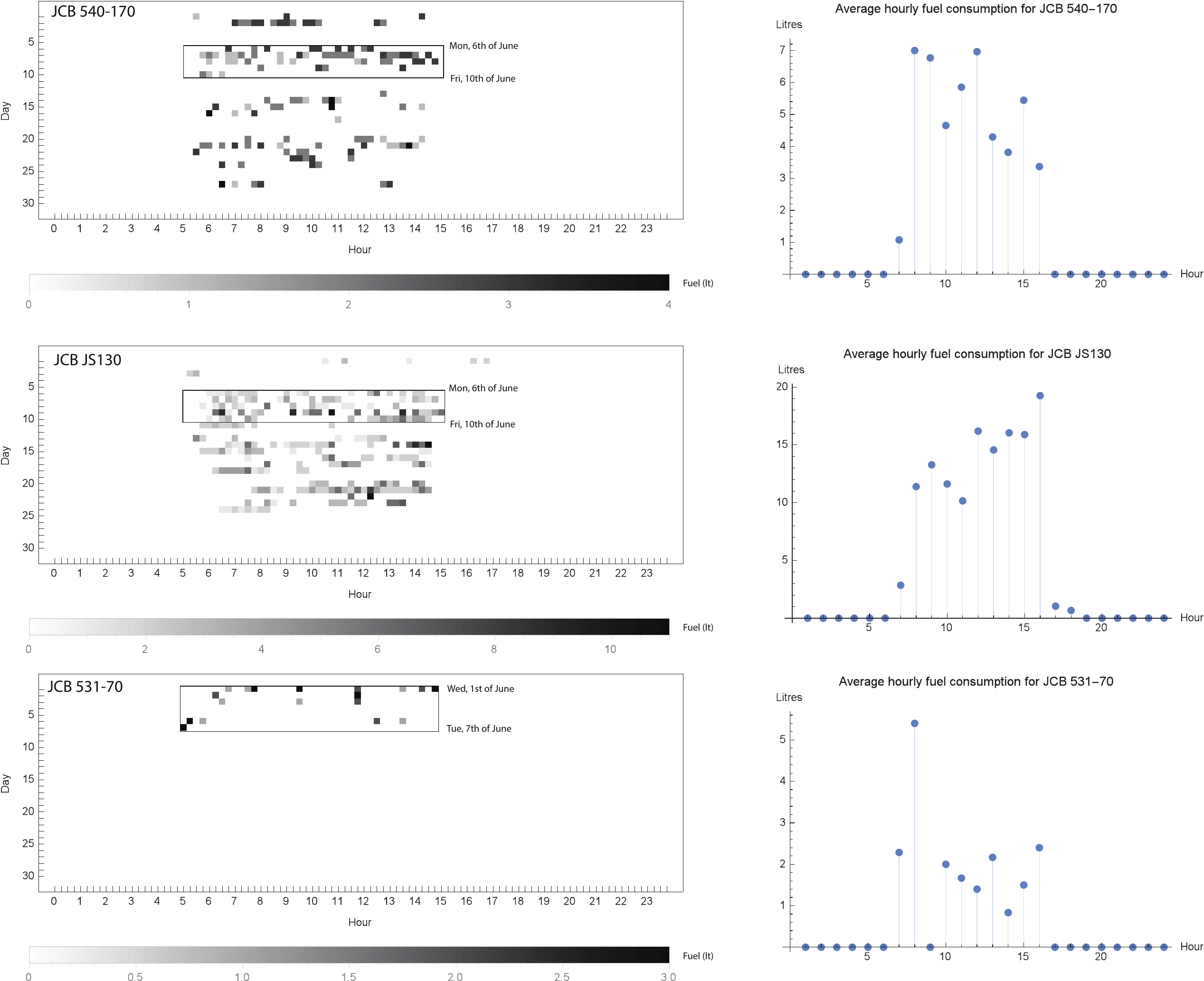}
\caption{Fuel consumption heatmaps and average hourly fuel consumption for a range of assets deployed on Crossrail sites between the 1st of June and the 30th of June 2016; for each asset, we indicate the representative time window that we employed for distribution fitting.}
\label{fig:AssetsFuelConsumption_1}
\end{figure}

\begin{figure}[p]
\centering
\includegraphics[width=1\columnwidth]{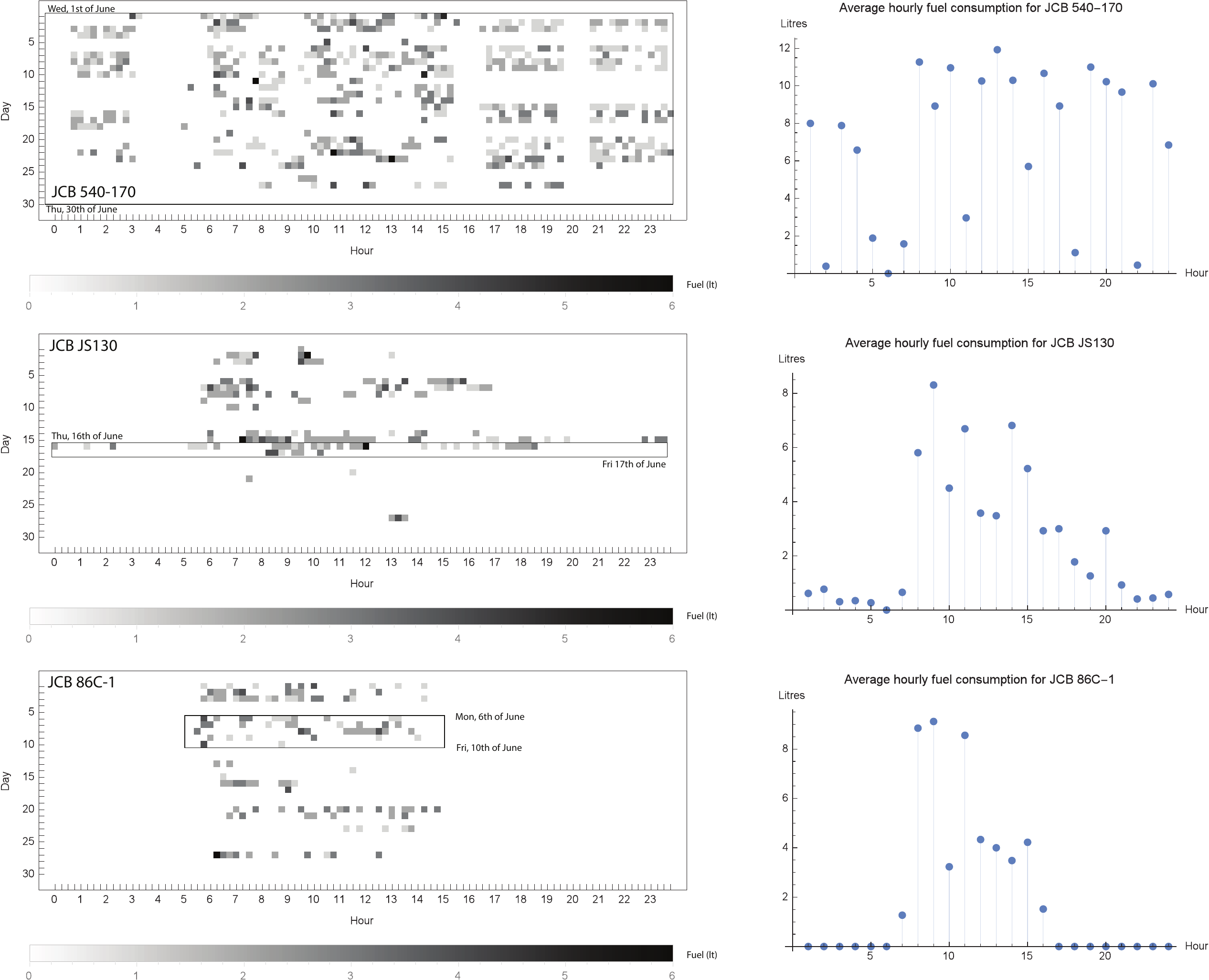}
\caption{Fuel consumption heatmaps and average hourly fuel consumption for a range of assets deployed on Crossrail sites between the 1st of June and the 30th of June 2016; for each asset, we indicate the representative time window that we employed for distribution fitting.}
\label{fig:AssetsFuelConsumption_2}
\end{figure}

\newpage

\section*{Appendix III}

Symbols used in the manuscript are listed in Table \ref{tab:symbols}.

\begin{table}[h!]
\begin{tabular}{ll}
\bf{Parameters}\\
\hline
$T$		&number of time periods;\\
$A$		&number of assets;\\
$N$		&number of nodes in the site network (i.e. $N=|V|$);\\
$d_{i,j}$	&distance between node $i$ and node $j$ in the site\\
		&network, if $i=j$, $d_{ij}=0$;\\
$\delta_{i,j}$&a binary variable that is set to one iif it \\
		&is possible to travel from node $i$ to node $j$ in one time period;\\
$l^a_{t,i}$	&a binary variable that is set to one iif asset $a$ \\
		&is at node $i$ during time period $t\in T$; i.e iif $l^a_t=i$;\\
$f^a_t$	&fuel consumption of asset $a$ in time period $t\in T$\\
$F$		&total fuel consumption for all assets across all time periods;\\		
$c_a$	&tank capacity of asset $a$;\\
$s_a$	&initial tank level of asset $a$;\\
$c_b$	&bowser tank capacity;\\
$s_b$	&initial bowser tank level;\\
$p$		&penalty cost per litre of fuel short;\\
\\
\bf{Decision variables}\\
\hline
$V^i_t$	&a binary variable that is set to one iif, at time $t$, \\
		&the bowser is at node $i$;\\
$T_t^{i,j}$	&an auxiliary binary variable that is set to one iif \\
		&the bowser transits from node $i$ to node $j$ by the end \\
		&of period $t$;	\\
$Q^a_t$	&the quantity of fuel delivered to asset $a$ at time $t$;\\		
$S^a_t$	&the fuel shortage for asset $a$ at time $t$;\\		
$B_t$	&the quantity of fuel transferred from the site cistern to the\\
		&bowser at time $t$;\\	
$[I^a_t]^-$&the expected fuel shortage for asset $a$ at time $t$,\\
		&where $\mathrm{E}[I^a_0]^-=\max(-s_a,0)$;\\
$[I^a_t]^+$&the expected fuel inventory for asset $a$ at time $t$, \\
		&where $\mathrm{E}[I^a_0]^+=\max(s_a,0)$;\\
$[E^a_t]$&the expected fuel quantity exceeding tank capacity \\
		&for asset $a$ at time $t$.\\		
\hline
\end{tabular}
\caption{Summary table of symbols used in this work.}
\label{tab:symbols}
\end{table}

\end{document}